\def\red{\color{black}}
\def\blue{\color{black}}
\def\brown{\color{black}}
\def\cI{{\mathcal I}}
\def\cH{{\mathcal H}}
\def\nn{\nonumber}
\def\a{\alpha} \def\b{\beta} \def\d{\delta} \def\D{\Delta}
\def\e{\varepsilon} \def\f{\phi}   \def\g{\gamma}
\def\z{\zeta}     \def\l{\lambda}
  \def\n{\nu} 
\def\r{\rho}  \def\s{\sigma} 
\def\t{\tau} \def\om{\omega}
\newtheorem{theorem}{Theorem}
\newtheorem{lemma}[theorem]{Lemma}
\newtheorem{corollary}[theorem]{Corollary}
\newtheorem{example}[theorem]{Example}
\newtheorem{definition}{Definition}
\newcommand{\rdown}[1]{{\left\lfloor #1\right \rfloor}}
\newcommand{\brac}[1]{\left(#1\right)}
\newcommand{\bfrac}[2]{\left(\frac{#1}{#2}\right)}
\def\cE{{\cal E}}
\newcommand{\set}[1]{\left\{#1\right\}}
\def\sm{\setminus}
\def\es{\emptyset}
\def\E{\mathbb{E}}
\def\Var{{\mathbb V}ar}
\def\Pr{\mathbb{P}}
\newcommand{\ignore}[1]{}
\def\cA{{\mathcal A}}
\def\cD{{\mathcal D}}
\def\cE{{\mathcal E}}
\def\cH{{\mathcal H}}
\def\cI{{\mathcal I}}
\def\cJ{{\mathcal J}}
\def\cO{{\mathcal O}}
\def\cS{{\mathcal S}}
\newcommand{\card}[1]{\left|#1\right|}
\newcommand{\beq}[2]{\begin{equation}\label{#1}#2\end{equation}}
\newcommand{\mults}[1]{\begin{multline*}#1\end{multline*}}
\newcommand{\mult}[2]{\begin{multline}\label{#1}#2\end{multline}}
\def\nn{\nonumber}
\def\cI{{\cal I}}
\def\cJ{{\cal J}}
\newcommand{\pbc}[1]{{\color{blue}{\bf [~Patrick:\ } \color{blue}{\em #1}\color{blue}{\bf~]}}}
\begin{document}
\author{Patrick Bennett\thanks{Department of Mathematics, Western Michigan University, Kalamazoo MI49008}\and Alan Frieze\thanks{Department of Mathematical Sciences, Carnegie Mellon University, Pittsburgh PA 15213, Research supported in part by NSF grant DMS1952285}\and Andrew Newman\thanks{Department of Mathematical Sciences, Carnegie Mellon University, Pittsburgh PA 15213.}\and Wesley Pegden\thanks{Department of Mathematical Sciences, Carnegie Mellon University, Pittsburgh PA 15213, Research supported in part by NSF grant DMS1700365. }}

\title{On the intersecting family process}
\maketitle
\begin{abstract}
We study the intersecting family process initially studied in \cite{BCFMR}. Here $k=k(n)$ and  $E_1,E_2,\ldots,E_m$ is a random sequence of $k$-sets from $\binom{[n]}{k}$ where $E_{r+1}$ is uniformly chosen from those $k$-sets that are not already chosen and that meet $E_i,i=1,2,\ldots,r$. We prove some new results for the case where $k=cn^{1/3}$ and for the case where $k\gg n^{1/2}$.
\end{abstract}
\section{Introduction}

We study the following process introduced by Bohman, Cooper, Frieze, Martin and Ruszink\'o \cite{BCFMR}: consider the random sequence $\cI_k=(E_1,E_2,\ldots,E_m)$ where $E_i\in\binom{[n]}{k}$ for $i=1,2,\ldots,m$ and (i) $E_1$ is uniformly random and (ii) $E_{i+1}$ is randomly chosen from the $k$-subsets of $[n]$ that are not already chosen and that intersect each of $E_1,E_2,\ldots,E_i$. The process continues until no further sets can be added i.e. until $\{E_1,E_2,\ldots,E_m\}$ is a maximal intersecting family. We will abuse terminology and sometimes consider $\cI_k$ to be a set of edges (i.e. a $k$-uniform hypergraph) instead of a sequence of edges.

We denote the hypergraph comprising the first $r$ accepted edges by $\cH_r$. For a set $S\subseteq [n]$, we let $e_r(S)$ denote the number of edges $E_i,i\leq r$ such that $E_i\cap S\neq \emptyset$.
For our purposes, an intersecting family is {\em trivial} if it is of the form $\cA_x=\set{E\in \binom{[n]}{k}:x\in E}$ where $x$ is some fixed element of $[n]$. Now $|\cA_x|=\binom{n-1}{k-1}$ and the famous result of Erd\H{o}s, Ko and Rado \cite{EKR} is that if $k\leq n/2$ then any intersecting family of $k$-sets is bounded in size by $\binom{n-1}{k-1}$ and that the maximum is achieved only by trivial families. The aim of \cite{BCFMR} was to see when the process $\cI_k$ produces a trivial family w.h.p. Their main result is the following:
\begin{theorem}\label{main}\
Let $\cE_0$ be the event that $\cI_k=\cA_x$ for some $x\in [n]$. If $k=c_nn^{1/3}<n/2$ then
$$\lim_{n\to\infty}\Pr(\cE_0)=\begin{cases}1&c_n\to 0\\
\frac{1}{1+c^3}&c_n\to c\\0&c_n\to\infty\end{cases}.$$
\end{theorem}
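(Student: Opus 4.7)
The plan is to reduce the calculation to the behavior of the first three edges. The starting point is that $\cI_k$ is trivial if and only if $\bigcap_i E_i \neq \emptyset$, and since this common intersection can only shrink over time, triviality is lost the first time an edge misses the current intersection. First I would establish that $|E_1 \cap E_2| = 1$ with probability $1 - o(1)$: since $E_2$ is uniform over $k$-sets meeting $E_1$, a short hypergeometric calculation shows the conditional probability that $|E_1 \cap E_2| \ge 2$ is $O(k^2/n) = O(c_n^2 n^{-1/3})$, which is $o(1)$ when $c_n$ is bounded (the rapidly-growing regime is handled separately). Let $x$ denote this common vertex. The key dichotomy is: if $x \notin E_3$, then already $E_1 \cap E_2 \cap E_3 = \emptyset$ and $\cI_k$ cannot be trivial, whereas if $x \in E_3$, I will argue that with probability $1-o(1)$ the process remains ``pinned'' at $x$ (every subsequent edge contains $x$) and so $\cI_k = \cA_x$.

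The main computation is then $\Pr(x \in E_3 \mid E_1 \cap E_2 = \{x\})$. Candidates for $E_3$ split into (a) the $\binom{n-1}{k-1} - 2$ unchosen $k$-sets containing $x$ (each of which automatically meets both $E_1$ and $E_2$), and (b) the $k$-sets not containing $x$ that meet both $F_i := E_i \setminus \{x\}$ ($i=1,2$). Inclusion--exclusion gives
\[
\#(\text{b}) \;=\; \binom{n-1}{k} - 2\binom{n-k}{k} + \binom{n-2k+1}{k},
\]
and expanding asymptotically (using $k^2/n = c_n^2 n^{-1/3} = o(1)$) yields $\#(\text{b}) = (1+o(1)) \binom{n-1}{k-1} \cdot k^3/n = (1+o(1)) c_n^3 \binom{n-1}{k-1}$. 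Therefore $\Pr(x \in E_3 \mid E_1 \cap E_2 = \{x\}) = 1/(1+c_n^3) + o(1)$, which yields the three cases of the theorem as $c_n \to 0$, $c$, or $\infty$.

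The principal obstacle is the pinning step: conditional on $x \in E_3$, show that $x \in E_r$ for all $r \ge 4$ with probability $1 - o(1)$. I would union-bound over the possible first escape step. Given the first $r$ edges are all stars through $x$, the conditional probability that $E_{r+1}$ does not contain $x$ equals $|\cA_r^{-x}|/|\cA_r|$, where $\cA_r$ is the set of allowed edges at step $r+1$ and $\cA_r^{-x} \subseteq \cA_r$ consists of those not containing $x$, i.e.\ $k$-subsets of $[n] \setminus \{x\}$ that meet every $F_i$. For a typical configuration of the $F_i$'s (pairwise disjoint whp for small $r$), inclusion--exclusion gives $|\cA_r^{-x}| \le (1+o(1)) \binom{n-1}{k-1} \cdot k^{2r-1}/n^{r-1}$; with $k = c_n n^{1/3}$ this is $O(c_n^{2r-1} n^{(2-r)/3})$, and the sum over $r \ge 3$ is a geometric series dominated by its first term, giving a total escape probability of $O(c_n^5 n^{-1/3}) = o(1)$. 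For $r$ large enough that the $F_i$'s overlap non-trivially, one shows that either the $F_i$'s still admit a large matching (reducing to the previous estimate with more terms) or $\bigcup_i F_i$ already covers a large fraction of $[n] \setminus \{x\}$, either way forcing $|\cA_r^{-x}|$ to be negligible; these later contributions are subdominant. Combining everything yields $\Pr(\cE_0) = 1/(1+c_n^3) + o(1)$, and the fast-growing regime $c_n = \Omega(n^{1/6})$ (where Step~1 fails) is handled by a direct estimate showing $\Pr(E_1 \cap E_2 \cap E_3 \neq \emptyset) \to 0$.
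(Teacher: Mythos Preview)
First, note that Theorem~\ref{main} is not proved in this paper: it is quoted from~\cite{BCFMR}. The only piece the present paper re-derives is the middle case $c_n\to c$, and it does so indirectly as a byproduct of Theorem~\ref{thm2}: one reads off $\Pr(r_0=3)=1/(1+c^3)$ from the generating function in part~(d), and observes that when $r_0=3$ the family $\cS$ is a single singleton so $\cI^*=\cI^{**}=\cA_x$. Your route is the direct one --- essentially the original argument of~\cite{BCFMR} --- and is independent of the junta machinery developed here. That is a legitimate alternative, and for the cases $c_n\to 0$ and $c_n\to c$ your outline is basically sound. The pinning step is the weakest link: the inclusion--exclusion bound you quote for $|\cA_r^{-x}|$ is only valid while the $F_i$ are pairwise disjoint, and the sentence about ``large matchings or large union'' does not obviously yield a summable bound for all $r$ up to $\binom{n-1}{k-1}$. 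This is fixable --- for instance by combining the deterministic bound for small $r$ with the first-moment estimate $\E|\cA_r^{-x}|\le\binom{n-1}{k}(k^2/n)^{r-2}$, which shows $|\cA_r^{-x}|=0$ w.h.p.\ once $r$ exceeds a modest multiple of $k$ --- but it needs to be written out.

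There is, however, a genuine gap in your treatment of the third case. The statement allows $k$ all the way up to $n/2$, so $c_n\to\infty$ includes in particular $k=\Theta(n)$. Your proposed fix for $c_n=\Omega(n^{1/6})$ is to show $\Pr(E_1\cap E_2\cap E_3\neq\emptyset)\to 0$, but this is simply false once $k^3/n^2\to\infty$ (equivalently $k\gg n^{2/3}$): for such $k$ the pairwise intersections have size $\Theta(k^2/n)$ and the triple intersection is nonempty --- indeed of size $\Theta(k^3/n^2)$ --- with probability tending to $1$. In that range $\bigcap_{i\le r}E_i$ remains nonempty for many steps, and proving it eventually empties (while the family has not yet become a full star) requires an argument of a different flavour. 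As written, your proof of the $c_n\to\infty$ case only covers $k=o(n^{2/3})$.
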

So if $c_n\to c$ in this theorem, then in the limit, there is a positive probability of $\frac{c^3}{1+c^3}$ that $\cI_k$ is not trivial. What can be said about this case? 

Patk\'os \cite{Pat} considered this question and showed that in the random intersecting process we study here that for $k = c_nn^{1/3}$ with $c_n \rightarrow c$ with probability 
\[\left(\frac{c^3}{1 + c^3}\right) \left(\frac{3}{3 + c^3} \right)\]
$\cI_k$ is a \emph{Hilton--Milner}-type hypergraph. A Hilton--Milner-type hypergraph was first described in \cite{HM} and is a $k$-uniform hypergraph obtained by specifying a single vertex $v$ and a single edge $F$ that does not contain $v$ and the edges are $F \cup \{E \in \binom{[n]}{k} \mid v \in E, F \cap E \neq \emptyset\}$. Our goal here is to extend beyond the two possibilities considered so far, the trivial system and the Hilton--Milner system, to further understand the full distribution of the asymptotic behavior in the critical regime.

Our first result (Theorem \ref{thm2} below) applies to the case where $k=c_n n^{1/3}$ where $c_n \rightarrow c$. We describe two randomly generated families $\cI^*, \cI^{**}$, which are approximately the same size, such that w.h.p. $\cI^* \subseteq \cI_k \subseteq \cI^{**}$. Furthermore $\cI^*, \cI^{**}$ can be determined from the very early evolution of the process.

We define two hitting times which will help determine $\cI^*, \cI^{**}$. We let $r_0$ be the first step $r$ such that $\cH_r$ has a vertex of degree three. We let $J$ be the set of vertices of degree two in $\cH_{r_0-1}$. A set $S\subseteq J$ is said to be {\em independent} if no edge of $\cH_{r_0-1}$ contains more than one member of $S$. For $r \ge r_0$ we let $\cS_r$ be the family of all $S \subseteq J$ that are independent  and which meet every edge $E_{r_0}, \ldots, E_{r}$. Let $r_1$ be the first step $r$ when $\cS_r$ is an intersecting family, and set $\cS:=\cS_{r_1}$ to be that intersecting family. (We show below that w.h.p. $\cS_r$ becomes an intertsecting family before the process terminates.) To summarise:{\red
\begin{align*}
r_0&=\text{the first step $r$ such that $\cH_r$ has a vertex of degree three.}\\
r_1&=\text{the first step $r$ when $\cS_r$ is an intersecting family.}
\end{align*}}
{\blue In the limit as $n \rightarrow \infty$, $\cS$ has a natural interpretation as an intersecting family of matchings of a complete graph. Both the size of this complete graph and the actual intersecting family are random, and in general $\cS$ will not be a uniform hypergraph. In the appendix we explain this random process to generate $\cS$ outside the broader context of its role in Theorem \ref{thm2}.}

\begin{theorem}\label{thm2} 
Let {\red $k=\rdown{cn^{1/3}}$} for some positive constant $c$. Then we have the following.
\begin{enumerate}[(a)]
\item Let $b = b(n) \rightarrow \infty$. Then w.h.p. $r_0 \le r_1 \le b$.
\item \label{partb} W.h.p. $\cI^* \subseteq \cI_k \subseteq \cI^{**}$ where
\begin{align*}
\cI^*&=\cH_{r_0-1} \cup \{E: E \mbox{ contains some  $S \in \cS$ and intersects every edge of $\cH_{r_0-1}$}\},\\
\cI^{**}&=\cH_{r_0-1}\cup \{E: E \mbox{ intersects each $S\in\cS$ and  intersects every edge of $\cH_{r_0-1}$}\}.
\end{align*}
\item W.h.p. $|\cI^{**} \sm \cI^*|=o(|\cI^*|)$ and 
\[
\frac{|\mathcal{I}_k|}{\binom{n}{k}} =(1+o(1)) \frac{\sum_{S \in \mathcal{S}} c^{3(r_0 - 1 - |S|)}}{k^{r_0 - 1}} \text{ as }n\to\infty.
\]
\item \label{r0dist} The sequence
\[{\red 
\left\{ \frac{\lim_{n\to\infty}\Pr(r_0 = i + 1)}{\lim_{n\to\infty}\Pr(r_0 > i + 1)} \right\}_{i \geq 0}}
\]
has exponential generating function 
\[e^x(e^{x^2/(2c^3)} - 1).\]
\end{enumerate}
\end{theorem}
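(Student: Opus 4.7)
The plan is to compute, for each fixed $i\ge 0$, the $n\to\infty$ limit of $\Pr(r_0 = i+1)/\Pr(r_0 > i+1)$ and then identify the resulting sequence with the claimed exponential generating function. Since $\Pr(r_0 = i+1) = \Pr(r_0 > i)\cdot\Pr(r_0 = i+1 \mid r_0 > i)$ and similarly for the denominator, the factor $\Pr(r_0 > i)$ cancels and the ratio equals the odds
\[
\frac{\Pr(r_0 = i+1 \mid r_0 > i)}{\Pr(r_0 > i+1 \mid r_0 > i)},
\]
namely the odds that the next edge $E_{i+1}$ creates a vertex of degree $3$ in $\cH_{i+1}$, conditional on $\cH_i$ having none. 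For $i\in\{0,1\}$ this odds is $0$ (too few edges), matching the vanishing coefficients of $x^0$ and $x^1$ in $e^x(e^{x^2/(2c^3)}-1)$.

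For $i\ge 2$, first I will pin down the typical structure of $\cH_i$ conditional on $r_0 > i$. Standard first-moment/second-moment arguments show that with probability $1 - O(n^{-1/3})$ each pair $(E_a,E_b)$ of edges has $|E_a\cap E_b| = 1$; then $\cH_i$ has exactly $\binom{i}{2}$ degree-$2$ vertices $D = \{v_{ab} : 1\le a<b\le i\}$ with $v_{ab}\in E_a\cap E_b$, and each edge $E_j$ contains exactly $i-1$ of these and $k - (i-1) = (1+o(1))k$ degree-$1$ vertices. Non-generic configurations (pairs meeting in $\ge 2$ vertices, triple intersections) have conditional probability $o(1)$ and contribute only lower-order corrections. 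Next, I parameterize $E_{i+1}$ by $(\sigma,\tau)$ with $\sigma := \{\{a,b\} : v_{ab}\in E_{i+1}\}$ viewed as a graph on $[i]$ and $\tau := \{j : E_{i+1}\cap(E_j\sm D)\ne\es\}$. Then $E_{i+1}$ meets every earlier edge iff $\tau\cup C(\sigma) = [i]$ (where $C(\sigma)$ is the vertex set covered by $\sigma$), and $E_{i+1}$ creates a degree-$3$ vertex iff $\sigma\ne\es$.

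Since $|D|=O(1)\ll n$ and the sets $E_j\sm D$ are pairwise disjoint of size $(1+o(1))k$, hypergeometric estimates yield $\Pr(v_{ab}\in E_{i+1}) = (1+o(1))k/n$ and $\Pr(j\in\tau) = (1+o(1))k^2/n$, with all these indicators asymptotically independent. A graph $\sigma$ with $e$ edges covering $c$ vertices, paired with the minimal $\tau = [i]\sm C(\sigma)$, contributes
\[
\bfrac{k}{n}^e \bfrac{k^2}{n}^{i-c} = c^{2i - 2c + e}\, n^{-(i + 2e - c)/3};
\]
enlarging $\tau$ beyond the minimum costs extra factors of $k^2/n$ and is lower order. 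For any graph with no isolated vertices (within its support) we have $2e - c \ge 0$, with equality precisely when every component is a single edge, so only matchings $\sigma$ contribute at the leading order $n^{-i/3}$; paths, cycles, and any other non-matching shapes are $o(n^{-i/3})$ and may be discarded.

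There are $\binom{i}{2j}(2j-1)!!$ matchings of size $j$ on $[i]$, so summing gives
\[
\Pr(E_{i+1}\text{ hits all } E_j, j\le i) = (1+o(1))\,\frac{c^{2i}}{n^{i/3}}\sum_{j\ge 0}\binom{i}{2j}(2j-1)!!\,c^{-3j},
\]
while the ``$\sigma=\es$" case ($j=0$) gives $(1+o(1))c^{2i}/n^{i/3}$. Taking the ratio,
\[
p_i := \lim_{n\to\infty}\frac{\Pr(r_0 = i+1)}{\Pr(r_0 > i+1)} = \sum_{j\ge 1}\binom{i}{2j}(2j-1)!!\,c^{-3j},
\]
and using $(2j-1)!! = (2j)!/(2^j j!)$,
\[
\sum_{i\ge 0}\frac{p_i x^i}{i!} = \sum_{j\ge 1}\frac{1}{j!(2c^3)^j}\sum_{i\ge 2j}\frac{x^i}{(i-2j)!} = e^x\sum_{j\ge 1}\frac{1}{j!}\bfrac{x^2}{2c^3}^j = e^x\brac{e^{x^2/(2c^3)} - 1},
\]
as required. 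The main technical obstacle is to make the asymptotic-independence estimates uniform enough: the uniformity of $E_{i+1}$ on $k$-subsets induces weak negative correlations between the indicators we wish to treat independently, and one must verify these only perturb the leading order by a $(1+o(1))$ factor. One must also check that conditioning on $r_0 > i$ (rather than on the cleaner generic structure described above) leaves the leading-order probabilities intact, which follows from the vanishing conditional probability of the degenerate configurations.
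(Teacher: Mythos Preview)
Your proposal addresses only part~(d); parts~(a), (b), and~(c) are left untouched. Those require substantial additional work: the paper tracks the evolution of $\cS_r$ through a growing regime (before any degree-$3$ vertex appears), a diminishing regime (from step $r_0$ until $\cS_r$ becomes intersecting at step $r_1$), and a stable regime, to establish~(a); it then runs a two-phase argument---first showing that from step $r_1$ to step $n^{\omega(1)}$ every new edge lies in $\cI^{**}$ with overwhelming probability, and second showing that by some step $n^{L}$ every edge outside $\cI^{**}$ has been closed by an edge of $\cI^*$---to obtain~(b); part~(c) then follows by directly estimating $|\cI^*|$ and $|\cI^{**}|$. None of this is touched by your write-up.

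For part~(d) itself your approach is essentially the paper's. Both condition on $r_0>i$, argue that $\cH_i$ is then simple w.h.p., identify the degree-$2$ vertices with the pairs $\{a,b\}\subseteq[i]$, observe that the leading-order ways for $E_{i+1}$ to create a degree-$3$ vertex correspond to choosing a nonempty independent set of degree-$2$ vertices (equivalently a matching on $[i]$), count matchings of size $m$ as $\binom{i}{2m}(2m-1)!!=i!/\bigl((i-2m)!\,m!\,2^m\bigr)$, and sum to obtain the EGF. Your parametrization by $(\sigma,\tau)$ and the observation that non-matching shapes of $\sigma$ are lower order is exactly the paper's assertion that only ``good'' (almost simple) extensions with $S\in\cS_r$ matter. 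One cosmetic point: you overload the symbol $c$ for both the constant in $k=c_n n^{1/3}$ and the number $|C(\sigma)|$ of vertices covered by $\sigma$, which makes the display $c^{2i-2c+e}\,n^{-(i+2e-c)/3}$ hard to parse; rename the latter.
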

Note that the distribution of $r_0$ can be recovered from part \eqref{r0dist}. Regarding part \eqref{partb}, we note that the containment $\cI^* \subseteq \cI_k \subseteq \cI^{**}$ is not in general strict at either end. For example $\cI^*$ may not be maximally intersecting, and $\cI^{**}$ may not be intersecting at all. We suspect that it may be possible to give a better estimate for where the final $\cI_k$ falls in between $\cI^*$ and $\cI^{**}$, but that this information cannot be determined from the first few edges.

We say that a family $\cJ \subseteq \binom{[n]}{k}$ is a {\it $j$-junta} if there is some $J \subseteq [n]$ with $|J|=j>0$ and a family $\cJ^*$ of subsets of $J$ such that 
 \[
 \cJ = \{T \in \binom{[n]}{k}: T \cap J \in \cJ^*\}.
 \]
 We say that {\red $\cJ$ is {\it generated by $(J,\cJ^*)$}}. Juntas are relevant here because, roughly speaking, they can provide a simple ``certificate'' that a family is intersecting. Specifically if $\cH \subseteq \cJ$ for some junta $\cJ$ generated by $(J,\cJ^*)$ with $\cJ^*$ intersecting, then $\cH$ must be intersecting. For example a trivial intersecting family is a 1-junta, sometimes called a {\it dictatorship}. For a second example a Hilton--Milner-type hypergraph is almost (with the exception of one edge) contained in a dictatorship. Using this terminology, Theorem \ref{thm2} has the following corollary. 
 \begin{corollary}\label{cor1}
 Let $k=\rdown{cn^{1/3}}$ where $c>0$, and $b = b(n) \rightarrow \infty$. Then w.h.p. there is some $b$-junta $\cJ$ such that $|\cI_k \sm \cJ|=o(|\cI_k|)$. Furthermore  $\cJ$ is determined by the hypergraph $\cH_b$.
 \end{corollary}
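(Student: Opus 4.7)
The strategy is to build the junta directly from the structures in Theorem \ref{thm2}, namely the degree-two vertex set $J$ and the independent intersecting family $\cS$. I take any $b$-element set $J^\dagger \subseteq [n]$ containing $J$ (padding with arbitrary extra vertices if $|J|<b$) and define
\[
\cJ = \bigl\{E \in \binom{[n]}{k}: E \supseteq S \text{ for some } S \in \cS\bigr\}.
\]
Membership in $\cJ$ depends only on $E \cap J$, hence only on $E \cap J^\dagger$, so $\cJ$ is the $b$-junta generated by $\cJ^* = \{T \subseteq J^\dagger : T \cap J \supseteq S \text{ for some } S \in \cS\}$. Since Theorem \ref{thm2}(a) gives $r_1 \le b$ w.h.p., both $\cH_{r_0-1}$ and $\cS$ are determined by the first $b$ edges, so $\cJ$ is indeed determined by $\cH_b$ as required.

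For the padding step to be legitimate I need $|J|\le b$ w.h.p., which is the one point that needs a short independent computation. From Theorem \ref{thm2}(a) one extracts $r_0 = O_P(1)$. In the critical regime $k=c_n n^{1/3}$ we have $\Pr(|E_i \cap E_j|\ge 2 \mid E_i\cap E_j\neq\emptyset) = O(k^2/n) = O(n^{-1/3})$, so summing over the $\binom{r_0}{2}=O_P(1)$ pairs of edges of $\cH_{r_0-1}$ shows that w.h.p.\ all such intersections have size exactly one. Since $\cH_{r_0-1}$ has maximum degree two, the degree-two vertices are enumerated by these pairwise intersections, so $|J| = \binom{r_0-1}{2} = O_P(1)$, hence $|J|\le b$ w.h.p.

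To establish $|\cI_k\setminus\cJ|=o(|\cI_k|)$, I combine parts (b) and (c) of Theorem \ref{thm2}. Any $E\in\cI^*\setminus\cH_{r_0-1}$ contains some $S\in\cS$ by definition of $\cI^*$, so $\cI^*\setminus\cH_{r_0-1}\subseteq\cJ$. Together with $\cI_k\subseteq\cI^{**}$ this gives
\[
\cI_k\setminus\cJ \;\subseteq\; (\cI^{**}\setminus\cI^*)\cup\cH_{r_0-1}.
\]
Theorem \ref{thm2}(c) yields $|\cI^{**}\setminus\cI^*|=o(|\cI^*|)=o(|\cI_k|)$, while $|\cH_{r_0-1}|\le r_0-1 = O_P(1)$ is negligible compared with the super-polynomial quantity $|\cI_k| = \Theta\!\bigl(\binom{n}{k}/k^{r_0-1}\bigr)$ read off from Theorem \ref{thm2}(c). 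The main — though minor — obstacle is the bound on $|J|$; it is not formally stated in Theorem \ref{thm2}, but the estimate above is exactly of the type that would already appear in its proof.
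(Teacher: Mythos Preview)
Your argument is correct. The paper itself does not supply a separate proof of Corollary~\ref{cor1}; it simply states that it follows from Theorem~\ref{thm2}, so you are filling in details the authors left implicit, and doing so soundly. The set-algebra step $\cI_k\setminus\cJ \subseteq (\cI^{**}\setminus\cI^*)\cup\cH_{r_0-1}$ is exactly right, and your bound $|J|=\binom{r_0-1}{2}=O_P(1)$ is what the paper establishes as Lemma~\ref{simplicityLemma} (simplicity of $\cH_{r_0-1}$), so you could just cite that instead of redoing the $O(k^2/n)$ estimate.

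One remark: a slightly cleaner choice of junta is the ``upper'' one
\[
\cJ' \;=\; \bigl\{E\in\tbinom{[n]}{k}: E\cap S\neq\emptyset \text{ for every } S\in\cS\bigr\},
\]
which is also a $|J|$-junta. Since every $E\in\cI^{**}\setminus\cH_{r_0-1}$ already meets every $S\in\cS$, part~(b) alone yields $\cI_k\setminus\cJ'\subseteq\cH_{r_0-1}$, hence $|\cI_k\setminus\cJ'|\le r_0-1$. This avoids invoking part~(c) and gives a quantitatively sharper exception set. Your version with $\cJ=\{E:E\supseteq S\text{ for some }S\in\cS\}$ is contained in $\cJ'$ (because $\cS$ is intersecting) and is equally valid; it just routes through the $|\cI^{**}\setminus\cI^*|=o(|\cI^*|)$ estimate.
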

 
Bohman, Frieze, Martin, Ruszink\'o and Smyth \cite{BCFMR1} considered the case where $n^{1/3}\ll k\ll n^{5/12}$. They prove that w.h.p. the structure of $\cI_k$ satisfies the following: there exists a hypergraph $\cH$ with $\t$ edges and a vertex $v$ such that the following holds: (i)  $n\t^3/6k^3$ converges to the exponential distribution with mean 1 and (ii) $\cI_k$ consists of all $E\in \binom{[n]}{k}$ that (a) contain $v$ and (b) meet every edge of $\cH$ that does not contain $v$. {\red So in this regime we also have that w.h.p. $\cI_k$ is almost (with the exception of the $poly(n)$ edges of $\cH$) contained in a dictatorship. It is interesting to note that, combining the results of \cite{BCFMR}, \cite{BCFMR1} and Theorem \ref{thm2}, for the whole regime $k \ll n^{5/12}$ we have that w.h.p. $\cI_k$ is almost contained in a relatively small junta of order $k^2/n^{1/3}$, and furthermore this small junta is a dictatorship unless $k=\Theta(n^{1/3})$.} This is not a shortcoming in our proof: indeed, for $k=\Theta(n^{1/3})$ and sufficently small fixed $\e>0$, Theorem \ref{thm2} implies that there is a probability bounded away from zero that no dictatorship contains more than $1-\e$ proportion of $\cI_k$.

We also make a little progress on the case where $k\gg n^{1/2}$. In particular we give non-trivial upper and lower bounds on $|\cI_k|$. Let $N=\binom{n}{k}$ and $d=\binom{n-k}{k}$. There is a trivial lower bound of $N/(d+1)$ on the minimum size of a maximal intersecting family. We prove that w.h.p. $|\cI_k|$ is significantly larger.
\begin{theorem}\label{th2}
Suppose that $k\gg n^{1/2}\log^{1/2}n$. Then w.h.p. $\Omega\bfrac{N\log N}{d}\leq |\cI_k|\leq O(N^{1-\eta})$ for some constant $\eta>0$.
\end{theorem}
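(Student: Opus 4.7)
I would analyze the process as the random greedy maximal independent set (MIS) procedure on the ``disjointness graph'' $G$ on vertex set $\binom{[n]}{k}$, with edges joining disjoint pairs; this $G$ is $d$-regular on $N$ vertices. Let $V_r$ denote the set of valid candidates at step $r$, so the process terminates at step $m=|\cI_k|$ when $V_m=\es$. Since $E_{r+1}$ is chosen uniformly from $V_r$ and the number of removed vertices is $1+\deg_{G[V_r]}(E_{r+1})$, one gets
\[
\E[|V_r|-|V_{r+1}|\mid V_r]=1+\bar d_r,
\]
where $\bar d_r$ is the average degree in the induced subgraph $G[V_r]$. The target is then to pin down the trajectory of $|V_r|$ and let $m$ fall out.

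The heart of the argument is the heuristic $\bar d_r=(1+o(1))d|V_r|/N$, which matches what a uniformly random size-$|V_r|$ subset of $\binom{[n]}{k}$ would give. Under this heuristic the differential equation $\dot v=-1-dv/N$ yields $v(r)\approx Ne^{-rd/N}$, terminating at $r^*\approx(N/d)\log N$. The heuristic is justified by counting disjoint pairs in $V_r=\binom{[n]}{k}\sm\bigcup_i A_i$, where $A_i=\{E:E\cap E_i=\es\}$, via inclusion--exclusion: the leading correction is governed by $|A_i\cap A_j|=\binom{n-|E_i\cup E_j|}{k}\le d\cdot e^{-\Omega(k^2/n)}$, which is $d\cdot n^{-\omega(1)}$ in the hypothesis regime $k\gg n^{1/2}\log^{1/2}n$, making pairwise corrections negligible. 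Concentration of $|V_r|$ around $v(r)$ then follows from the Wormald differential equation method combined with Azuma--Hoeffding, yielding $m=(1+o(1))r^*$ w.h.p.

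This immediately gives the lower bound $|\cI_k|\ge\Omega(N\log N/d)$. For the upper bound $O(N^{1-\eta})$, the same concentration gives $m\le O((N/d)\log N)$, and in the relevant regime (with $k$ bounded away from $n/2$) the ratio $\log d/\log N$ is bounded below by an absolute positive constant $c$, so $d\ge N^c$ and $(N/d)\log N\le N^{1-\eta}$ for any $\eta\in(0,c)$.

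\textbf{Main obstacle.} The technically delicate step is carrying out the Wormald method with enough auxiliary variables to track the co-degree and disjoint-pair structure of $V_r$ (not just its cardinality), verifying the small-change condition, and propagating the concentration throughout the process rather than only near $r=0$. The hypothesis $k^2/n\gg\log n$ is crucial here: it makes $e^{-\Omega(k^2/n)}$ super-polynomially small, ensuring the inclusion--exclusion corrections decay fast enough for the heuristic $\bar d_r\approx d|V_r|/N$ to hold uniformly along the trajectory, which is precisely what drives both bounds.
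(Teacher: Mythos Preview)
Your overall framework---greedy MIS on the Kneser graph, differential-equation heuristic $|V_r|\approx N e^{-rd/N}$, termination near $(N/d)\log N$---is exactly the paper's. The concrete gap is the codegree step. The inequality
\[
|A_i\cap A_j|=\binom{n-|E_i\cup E_j|}{k}\le d\cdot e^{-\Omega(k^2/n)}
\]
is \emph{not uniform}: it holds only when $|E_i\cap E_j|=o(k)$. If two $k$-sets overlap in $k-1$ elements, their codegree is $\binom{n-k-1}{k}\sim d$, and more generally every vertex of the Kneser graph has codegree $\Theta(d)$ with some other vertices. The paper states explicitly that this is why off-the-shelf MIS analyses (Wormald, Lauer--Wormald, Bennett--Bohman) do not apply, and that handling these few high-codegree pairs is its main technical contribution. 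Concretely, the paper tracks not only $|V(r)|$ and the per-vertex available degree $D_v(r)$ but also the set $B(r)$ of available vertices that have high codegree with some already-chosen vertex, and $C_v(r)=|D_v(r)\cap B(r)|$; the dynamic concentration of $D_v$ is only claimed for $v\notin B(r)$, and a separate supermartingale argument keeps $|B(r)|$ and $C_v(r)$ small. Your proposal acknowledges in the ``main obstacle'' paragraph that auxiliary codegree variables are needed, but the one codegree estimate you write down is the false uniform one, so the missing idea is precisely the $B(r)$ mechanism.

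A secondary point: Azuma--Hoeffding on $|V_r|$ with the crude Lipschitz bound $|\Delta V_r|\le d+1$ is far too weak over $r_{end}=\Theta((N/d)\log N)$ steps---the resulting deviation bound is of order $d\sqrt{r_{end}}\gg N f(t)$. The paper uses Freedman's inequality, which exploits that the conditional variance is much smaller than $d^2$; this is what makes the concentration go through. Also, you assert $m=(1+o(1))r^*$, which is stronger than what the paper actually proves: it only runs the analysis to $r_{end}$ equal to a small constant times $(N/d)\log N$, getting the lower bound from $|V(r_{end})|>0$ and the upper bound $|\cI_k|\le r_{end}+|V(r_{end})|=O(N^{1-\eta})$ from the estimate on $|V(r_{end})|$. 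Pushing the differential-equation method all the way to termination would require additional work you have not outlined.
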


 Our proof of Theorem \ref{th2} will resemble the analysis of some similar processes that have been studied. Suppose we are given a graph $G$. In the {\it random greedy independent set process}, or just {\it independent process}, we choose a random sequence $(v_1, \ldots, v_m)$ of vertices of $G$ where $v_{i+1}$ is randomly chosen from all vertices not already chosen and not adjacent to any chosen vertex. Then the process we are studying is equivalent to the independent process on the Kneser graph $K(n, k)$ which has vertex set $\set{v_S: S \in \binom{[n]}{k}}$, and where $v_S$ is adjacent to $v_{S'}$ whenever $S \cap S' = \es$. Wormald \cite{Wormald} was the first to study the independent process, which he analyzed on random regular graphs. Lauer and Wormald \cite{LW} extended the analysis from random regular graphs to all regular graphs of sufficiently high girth. Bennett and Bohman \cite{BB} analyzed a generalization of this process to $d$-regular hypergraphs with $d \rightarrow \infty$ sufficiently fast assuming a relatively mild upper bound on codegrees. So, one could hope that we could just use some existing analysis of the independent process and apply it to the Kneser graph. Unfortunately, all the results for general deterministic graphs (or at least the ones we could find) assume a similar upper bound on vertex codegrees which the Kneser graph does not satisfy. Indeed, for say $k=cn$ for constant $c$, the graph $K(n, k)$ has pairs of vertices whose codegree is the same order of magnitude as their degrees. Even worse, every vertex in the whole graph has high codegree with a few other vertices. Thus, our main contribution in this regime is to carry out an analysis of the process that resembles previous work but allows for a few pairs of vertices with high codegree. The analysis uses the so-called differential equation method (see \cite{BD} for a gentle introduction).

To put the above results in some context we point out another related but distinct random model of intersecting families. Improving a result of Balogh, Das, Delcourt, Liu and Sharifzadeh \cite{B0},  Balogh, Das, Liu, Sharifzadeh and Tran \cite{B1} and Frankl and Kupavskii \cite{FK} showed that if $n\geq 2k+\Omega((k\log k)^{1/2})$ then almost all intersecting families are trivial. Balogh, Garcia, Li and Wagner \cite{B2}  reduced the lower bound on $n$ to $2k+100\log k$. Dinur and Friedgut \cite{DF} proved that when $k=pn$ for constant $0<p<1/2$, every intersecting family is (up to a small number of members) contained in an $O(1)$-junta. In \cite{DF} they also proved that when $k=o(n)$ every intersecting family is almost contained in a trivial family (a dictatorship). 

 The organization of the paper is as follows. In Section \ref{sec2} we prove Theorem \ref{thm2}. In Sections \ref{constant} and \ref{smallk} we prove Theorem \ref{th2}, where in each of the two sections we consider a certain range of $k$. The proofs for each regime are almost identical, but the error bounds are somewhat different. In Section \ref{summary} we give some concluding remarks. 
\section{Proof of Theorem \ref{thm2}}\label{sec2}
\subsection{The first $o(\sqrt{\log n})$ steps: generating $\mathcal{S}$}

We examine the first $r$ steps in the process for $r = o(\sqrt{\log n})$. The most important part of the random intersecting process in these first few steps is the generation of a (not necessarily uniform) intersecting hypergraph $\mathcal{S}$ that will be the $\mathcal{S}$ in the definition of $\cI^*$ and $\cI^{**}$. {\blue The random intersecting hypergraph $\mathcal{S}$ in the limit as $n \rightarrow \infty$ is described on its own in the appendix. } To describe $\mathcal{S}$ here we introduce some notation.

Denote by $V(\mathcal{H}_r)$ the set of all vertices of degree at least one in $\cH_r$, and by $U(\mathcal{H}_r)$ the set of vertices of degree exactly one. For any set $S$ of vertices of degree at least two in $\mathcal{H}_r$, let $E_r(S)$ denote the set of edges of $\cH_r$ that contain a vertex in $S$ and let $e_r(S)=|E_r(S)|$. Let $\chi_r(S) = e_r(S) - 2|S|$ and let $\chi^*_r = \max\{\chi_r(S) \mid S \subseteq V(\mathcal{H}_r) \setminus U(\mathcal{H}_r)\}$. At each step $r$, we let 
\[
\mathcal{S}_r = \{S \subseteq V(\mathcal{H}_r) \setminus U(\mathcal{H}_r) \mid \chi_r(S) = \chi^*_r\}
\]
Here we will describe three regimes for $\mathcal{S}_r$: a growing regime, a diminishing regime, and a stable regime. As the name suggests, $\mathcal{S}$ will denote the unchanging $\mathcal{S}_r$ within the stable regime.

The growing regime coincides with all steps before vertices of degree three show up. Letting $r_0$ denote the number of edges when the first vertex of degree three appears we have by the following lemma that w.h.p. for $r < r_0$, $\mathcal{S}_r$ is just the collection of all independent sets of vertices of degree two in $\mathcal{H}_r$ and $\chi^*_r = 0$. {\red A hypergraph $H$ is {\em simple} if $E_1,E_2\in E(H)$ implies that $|E_1\cap E_2|\leq 1$.}

\begin{lemma}\label{simplicityLemma}
If $r=o(\sqrt{\log n})$ and $\mathcal{H}_{r-1}$ is simple and has no vertices of degree at least 3 then the probability that $\mathcal{H}_r$ is not simple is $O(1/k^{1-o(1)})$.
\end{lemma}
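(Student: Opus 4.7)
The plan is to express the probability in question as a ratio $B_{r-1}/N_{r-1}$, where $N_{r-1}$ counts the $k$-sets eligible to serve as $E_r$ (those meeting every $E_i$ with $i<r$ and not already chosen) and $B_{r-1}$ is the subcount of those sharing at least two vertices with some $E_i$, which is precisely the bad event that destroys simplicity. Under the stated hypotheses, $E_1,\ldots,E_{r-1}$ form an ``almost matching'': any two share at most one vertex and no vertex lies in three edges, so union sizes satisfy $|E_{i_1}\cup\cdots\cup E_{i_t}|=kt-s_T$ with $0\le s_T\le\binom{t}{2}\le r^2=o(\log n)$.

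For the denominator I would run inclusion--exclusion on the miss events $A_i=\{E:E\cap E_i=\es\}$. Since $k=c_nn^{1/3}$ gives $k^2/n\to 0$, each term $\binom{n-u_T}{k}/\binom{n}{k}$ reduces (after a short computation) to $e^{-k^2|T|/n}(1+o(1))$, and the indicators $X_i=\ind{E\cap E_i\ne\es}$ for a uniform $k$-set $E$ are nearly independent Bernoulli$(p)$ with $p=1-e^{-k^2/n}\sim k^2/n$. A direct second moment computation---whose covariance $\mathrm{Cov}(X_i,X_j)$ is negligible compared to $p^2$ precisely because $|E_i\cap E_j|\le 1$---then yields
\[
N_{r-1}=(1+o(1))\binom{n}{k}\left(\frac{k^2}{n}\right)^{r-1},
\]
and the $r-1$ already-chosen sets contribute negligibly to this count.

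For the numerator I would union-bound over the $r-1$ edges $E_i$ and the $\binom{k}{2}$ pairs $\{u,v\}\subseteq E_i$: the number of valid $k$-sets containing a fixed pair $\{u,v\}$ equals the number of $(k-2)$-subsets of $[n]\setminus\{u,v\}$ meeting each $E_j$ for $j\ne i$, which by the same analysis applied to the reduced universe is $(1+o(1))\binom{n-2}{k-2}(k^2/n)^{r-2}$. Dividing gives
\[
\frac{B_{r-1}}{N_{r-1}}=O\!\left(\frac{(r-1)\binom{k}{2}\binom{n-2}{k-2}}{\binom{n}{k}(k^2/n)}\right)=O\!\left(\frac{r k^2}{n}\right)=O\!\left(\frac{r}{k}\right)=O(k^{-1+o(1)}),
\]
since $r=o(\sqrt{\log n})=k^{o(1)}$, which is the required bound.

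The main obstacle is proving the asymptotic identity for $N_{r-1}$ rather than just a one-sided estimate, because the alternating sum $\sum_T(-1)^{|T|}\binom{n-kt+s_T}{k}$ has large cancellations and one cannot na\"ively sum relative errors term-by-term. The probabilistic route outlined above---computing $\E[\prod_i X_i]$ via approximate independence and a small covariance correction---is the cleanest: it avoids all bookkeeping of the $s_T$ and applies verbatim to the conditional numerator count, so the ratio simplifies without extra effort.
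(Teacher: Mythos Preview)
Your overall ratio strategy is sound, but the core claim---that $\mathrm{Cov}(X_i,X_j)$ is negligible compared to $p^2$ ``precisely because $|E_i\cap E_j|\le 1$''---is false in the regime $k=cn^{1/3}$; indeed it is the single shared vertex that makes the covariance the \emph{same order} as $p^2$. When $|E_i\cap E_j|=1$ one computes $\Pr(X_i=X_j=1)-p^2=(1-p)^2(k/n)(1+o(1))$, and since $p\sim k^2/n$ this gives $\mathrm{Cov}(X_i,X_j)/p^2\to c^{-3}$, a nonzero constant. Hence your asymptotic $N_{r-1}=(1+o(1))\binom{n}{k}(k^2/n)^{r-1}$ is wrong; the true value carries an extra factor $\sum_{m\ge 0}I_{r-1,m}\,c^{-3m}$, where $I_{r-1,m}$ counts independent $m$-sets of degree-two vertices in $\mathcal{H}_{r-1}$, reflecting the possibility that the new edge hits several such vertices at once. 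The same defect infects your numerator asymptotic. This is not fatal to the lemma---these factors are at most $2^{O(r^2)}=k^{o(1)}$, which the target bound absorbs---but your argument as written does not establish even these weaker estimates: positive correlation gives only a \emph{lower} bound on $N_{r-1}$, while for the numerator you need an \emph{upper} bound, and neither ``near-independence'' nor naive inclusion--exclusion supplies one.

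The paper avoids inclusion--exclusion entirely. It obtains the lemma as a special case of a more general statement (that bad extensions are rare) via direct constructive counting: partition candidate edges $E$ according to the set $S$ of degree-$\ge 2$ vertices of $\mathcal{H}_{r-1}$ that $E$ contains; for each $S$ choose one degree-one vertex from each of the $r-1-e(S)$ uncovered edges, then fill the remaining $k-|S|-(r-1-e(S))$ positions freely. This yields matching upper and lower bounds $\nu(S)\sim c^{3(r-1-e(S)+|S|)}k^{-(r-1-e(S)+2|S|)}\binom{n}{k}$ with no alternating-sign cancellations to control, and the simplicity lemma follows by comparing the non-simple contributions to the total. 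If you want to rescue your own route, either carry the inclusion--exclusion through exactly (using $s_T=\binom{|T|}{2}$, since every pair of edges in the intersecting family meets) to recover the correct extra factor, or simply downgrade your $(1+o(1))$ claims to $k^{\pm o(1)}$ via the same constructive count; either way your final ratio computation then goes through unchanged.
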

This lemma will follow as a corollary to another lemma we prove shortly, so we save the proof for later. By part \eqref{r0dist} of Theorem \ref{thm2}, $r_0$ will take {\red some value bounded by any slowly growing function of $n$}, and we use without further remark that with high probability $r_0$ is smaller than any function that tends to infinity. 

By simplicity, before vertices of degree three appear the size of $\mathcal{S}_r$ is given by the following lemma: a hypergraph will be called intersecting if its edge set defines an intersecting family.
\begin{lemma}\label{number}
If $\mathcal{H}$ is a $k$-uniform simple intersecting hypergraph on $r$ edges with maximum degree two then for each $1 \leq m \leq r/2$ the number of independent sets of vertices of degree two of size $m$ is 
\[\frac{r!}{(r - 2m)! m! 2^m}.\]
\end{lemma}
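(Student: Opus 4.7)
The plan is to establish a bijection between independent sets of degree-two vertices of size $m$ and matchings of size $m$ in $K_r$, where we identify the vertex set of $K_r$ with the edge set of $\mathcal{H}$. Granted this bijection, the formula follows from the standard count
\[
\binom{r}{2m}\cdot\frac{(2m)!}{m!\,2^m}=\frac{r!}{(r-2m)!\,m!\,2^m},
\]
i.e.\ choose the $2m$ matched edges, then partition them into $m$ unordered pairs.

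First I would pin down the correspondence between degree-two vertices and unordered pairs of edges. In the setting where this lemma is applied, $\mathcal{H}=\mathcal{H}_{r-1}$ arises from the intersecting process, so every two edges of $\mathcal{H}$ share at least one vertex. Together with simplicity (every two edges share at most one vertex) and the max-degree-two hypothesis, this forces each pair $\{E_i,E_j\}$ to share exactly one vertex $v_{ij}$, which has degree exactly two. Conversely, every degree-two vertex lies in exactly two edges, so equals some $v_{ij}$. Hence $v_{ij}\leftrightarrow\{E_i,E_j\}$ gives a bijection between degree-two vertices of $\mathcal{H}$ and unordered pairs from $[r]$.

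Second, I would translate the independence condition. Under the above bijection, a collection $S=\{v_{i_1j_1},\ldots,v_{i_mj_m}\}$ of degree-two vertices corresponds to the collection of pairs $\{\{i_1,j_1\},\ldots,\{i_m,j_m\}\}$. The definition of $S$ being independent (no edge of $\mathcal{H}$ contains more than one element of $S$) is equivalent to no index in $[r]$ appearing in more than one of these pairs, which is precisely the matching condition in $K_r$. Counting matchings of size $m$ in $K_r$ yields the displayed formula.

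There is no real obstacle; the only subtle point is that the pairwise-intersecting property is used crucially and is not visible in the lemma statement itself (a $k$-uniform simple max-degree-two hypergraph that is not intersecting, such as a ``path'' of three edges, would violate the count). This property is however automatic from the context: every $\mathcal{H}_r$ produced by $\mathcal{I}_k$ is an intersecting family by construction. Once this is invoked, the bijection is immediate and the count is purely combinatorial.
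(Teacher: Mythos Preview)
Your argument is correct and takes a different route from the paper. The paper proves the formula by setting up the recurrence
\[
I_{r,m}=\frac{\binom{r}{2}I_{r-2,m-1}}{m}
\]
(pick a degree-two vertex, delete it and its two edges, continue) and solving it by induction on $m$. You instead give a direct bijection with size-$m$ matchings of $K_r$ and quote the standard matching count. Your approach is more conceptual and shorter; the recurrence is slightly more hands-on but amounts to the same thing, since the recurrence is exactly the one satisfied by the matching numbers.

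You are also right that both arguments silently use that $\mathcal{H}$ is intersecting: the paper's proof needs exactly $\binom{r}{2}$ degree-two vertices (and again $\binom{r-2}{2}$ after deletion), which fails for a non-intersecting simple max-degree-two hypergraph just as your bijection does. So your caveat about the unstated hypothesis applies equally to the paper's own proof; it is harmless in context since every $\mathcal{H}_r$ in the process is intersecting by construction.
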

\begin{proof}
Let $I_{r, m}$ denote the number of independent sets of vertices of degree two of size $m$ when $\mathcal{H}$ satisfies the assumptions. Then, $I_{r, m}$ satisfies the recurrence,
\[I_{r, m} = \frac{\binom{r}{2} I_{r - 2, m - 1}}{m}.\]
Indeed we can pick any of the $\binom{r}{2}$ vertices of degree two to start building an independent set. We then delete the selected vertex and the two edges that contain it and then take an independent set of vertices of degree two of size $m - 1$ from the remaining hypergraph. However, this overcounts by a factor of $m$ because of the choice of the first vertex. 

Now clearly $I_{r, 1} = \binom{r}{2} = \frac{r!}{(r - 2)! 1! 2^1}$. And by induction using the recurrence above we have 
\[
I_{r, m} = \frac{r(r - 1) (r - 2)!}{2m(r - 2- 2m + 2)!(m - 1)!2^{m - 1}} = \frac{r!}{(r - 2m)!m!2^m}=\binom{r}{2m}\binom{2m}{m}2^{-m}.
\]
\end{proof}

We next show that once vertices of degree three appear $\chi^*_r$ starts to increase by one at each step, and that $\mathcal{S}_r$ decays, until it reaches an intersecting family at time $r_1$ with $\mathcal{S} := \mathcal{S}_{r_1}$. 

Before we prove how the random intersecting process stabilizes to $\mathcal{S}$, we introduce some definitions:
\begin{definition}
An edge $E$ that extends the intersecting family $\mathcal{H}_r$ to an intersecting family $\mathcal{H}_{r+1}$ is said to be an \emph{almost simple extension} if for every pair of distinct $x,y\in E$ that are already contained in a common edge in $\mathcal{H}_r$,  both $x$ and $y$ have degree at least two in $\mathcal{H}_r$. 
\end{definition}

\begin{definition}
Given a $k$-uniform intersecting family $\mathcal{H}_r$ on $r$ edges, an edge $E$ that extends the intersecting family is \emph{good} if it is an almost simple extension and the set of vertices $S$ of $\mathcal{H}_r$ of degree at least two that belong to $E$ satisfies $\chi_r(S) = \chi_r^*$. Otherwise we say the extension is \emph{bad}.
\end{definition}

\begin{lemma}\label{GoodExtensions}
Suppose that $r=o(\sqrt{\log n})$ and that $\mathcal{H}_r$ has been generated by the random intersecting process via good extensions with $\mathcal{S}_r$ as defined above, then the probability that $\mathcal{H}_{r + 1}$ is generated by selecting a bad extension of $\mathcal{H}_{r}$ is at most $1/k^{1-o(1)}$.
\end{lemma}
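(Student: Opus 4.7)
The plan is to bound $\Pr[E_{r+1}\text{ is bad}]$ by the ratio $B_r/N_r$, where $N_r$ is the number of intersecting extensions of $\cH_r$ and $B_r$ the number of bad ones. Decompose each extension $E$ as $E = S\cup T\cup W$ with $S = E\cap(V(\cH_r)\setminus U(\cH_r))$, $T = E\cap U(\cH_r)$, and $W = E\setminus V(\cH_r)$: then the number of extensions with a prescribed $(S,T)$ is $\binom{n-|V(\cH_r)|}{k-|S|-|T|}$, reducing the proof to comparing numbers of valid $(S,T)$-patterns.

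For Type A (failure of almost-simplicity) I would use a single-vertex switching argument. Given a bad $E$ with a witness pair $x,y\in E\cap E'$ for some $E'\in\cH_r$ and $y\in U(\cH_r)$, swap $y$ out for any $w\in[n]\setminus(V(\cH_r)\cup E)$; the resulting $E^\star$ is still an intersecting extension, since $x$ still hits $E'$ and $y$'s only incident edge was $E'$. Counting triples $(E,y,w)$ in two ways, each bad $E$ gives at least $n-O(rk)$ such triples, while each good $E^\star$ is hit by at most $|U(\cH_r)|\cdot|E^\star| = O(rk^2)$ triples (choose $y$ a degree-1 vertex of $\cH_r$, then $w\in E^\star$). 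Hence $B_{\text{A}} \le O(rk^2/n)\cdot N_r$.

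For Type B (suboptimal $\chi_r(S)$) the key observation is that $e_r(S)\le 2|S|+\chi_r^*-1$ forces $|S|+|T|$ to exceed the minimum value $(r-\chi_r^*)/2$ achieved by a good extension, which costs a factor $\Theta(k/n)$ in fibre size. A parallel switching argument --- removing a vertex of $E\cap V(\cH_r)$ that is redundant under the suboptimal covering pattern, and replacing it with a vertex from $[n]\setminus V(\cH_r)$ --- together with the fact that $|V(\cH_r)\setminus U(\cH_r)|$ and the number of admissible suboptimal $(S,T)$-configurations grow only polynomially in $r$ under the good-extensions assumption, again yields a bound of the form $\mathrm{poly}(r,k)\cdot O(k/n)$ times $N_r$. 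Using $k=\Theta(n^{1/3})$ (so that $k^2/n=\Theta(1/k)$) and $r=o(\sqrt{\log n})=k^{o(1)}$, both contributions are at most $1/k^{1-o(1)}$.

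The main obstacle is the Type B analysis: Type A admits an immediate bijective swap because the witness pair directly identifies the vertex to remove, whereas for Type B one must first verify structurally that any suboptimal pattern really does use at least one extra vertex in $V(\cH_r)$ (a careful manipulation of the $e_r$ and $\chi_r$ identities using that $\cH_r$ was built by good extensions) and then locate a vertex of $E$ whose swap restores the optimal value of $\chi_r$ without breaking intersectingness. When $r<r_0$ we have $\chi_r^*=0$ and Type B is vacuous, so Lemma~\ref{simplicityLemma} falls out as the Type A-only case.
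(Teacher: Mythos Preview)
Your Type A switching is correct and is a tidy alternative to the paper's direct enumeration of not-almost-simple extensions; the paper instead counts such extensions by first choosing the offending pair and then the rest of the edge, but your swap-out-the-degree-one-witness map with the $O(rk^2/n)$ accounting works cleanly, and your observation that Lemma~\ref{simplicityLemma} is the Type-A-only case is right.

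The Type B argument, however, has a genuine gap. First, the claim that a suboptimal $\chi_r(S)$ forces $|S|+|T|$ above the good minimum, with the cost paid ``in fibre size'', is false. For an almost-simple extension one has exactly $|T|=r-e_r(S)$, so $|S|+|T|=r-(e_r(S)-|S|)$, which is controlled by $e_r(S)-|S|$, not by $\chi_r(S)=e_r(S)-2|S|$. In the growing regime, take $S=\{s_1,s_2\}$ with the $s_i$ the two degree-two vertices on a common edge: then $e_r(S)=3$, $\chi_r(S)=-1$ (bad), yet $|S|+|T|=r-1$, \emph{smaller} than the good pattern $S'=\emptyset$ with $|S'|+|T'|=r$. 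The factor of $k$ you are after actually lives in the number of choices for $T$ (which is $\sim k^{r-e_r(S)}$, not polynomial in $r$), not in the fibre. Second, the proposed swap is impossible precisely in the main case $\chi_r(S)=\chi_r^*-1$: in an almost-simple extension no $v\in T$ is redundant (its unique edge meets $E$ only at $v$), and if some $v\in S$ were redundant then $e_r(S\setminus\{v\})=e_r(S)$, giving $\chi_r(S\setminus\{v\})=\chi_r(S)+2=\chi_r^*+1>\chi_r^*$, a contradiction. So there is no vertex you can replace by one from $[n]\setminus V(\cH_r)$ while keeping the new edge intersecting.

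The paper handles Type B by a direct count that your $(S,T,W)$ decomposition sets up perfectly: for fixed $S$, the number of almost-simple extensions with that $S$ is
\[
\nu(S)\sim k^{r-e_r(S)}\binom{n}{k-(r-e_r(S))-|S|}\sim \frac{c^{3(r-e_r(S)+|S|)}}{k^{\,r-\chi_r(S)}}\binom{n}{k},
\]
so summing over the at most $2^{r^2}=k^{o(1)}$ bad choices of $S$ (each contributing at most $k^{-(r-\chi_r^*+1)}\binom{n}{k}$) and comparing to a single $S$ with $\chi_r(S)=\chi_r^*$ gives the ratio $1/k^{1-o(1)}$ immediately. This direct computation is both shorter and cleaner than a switching here.
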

\begin{proof}
We first show that the number of extensions that are not almost simple is at most $k^{-(r + 1 - \chi_r^*+o(1))}\binom{n}{k}$.

We have two cases to consider, either $E$ contains two vertices of degree one in the same edge of $\mathcal{H}_r$ or else it contains one vertex of degree one and one vertex of degree two from an edge of $\mathcal{H}_r$. To build such an extension in the former case we first choose an edge ($r$ choices) and then two vertices of degree one from it (at most $k^2$ choices). Next we choose a set $S$ of vertices of degree at least two to belong to the new edge. Since $\mathcal{H}_r$ has been built out of good extensions, the number of vertices of degree at least two is at most $\binom{r}{2}$. So we have at most $2^{r^2}$ choices for $S$. Once the two vertices of degree one in a common edge and $S$ have been selected for $E$ we have covered $1 + e_r(S)$ edges, so we have $r - e_r(S)- 1$ edges still to cover. We cover these remaining edges with vertices of degree one since $S$ already accounts for the vertices of degree larger than 1 that we will use. So we have to pick at least one vertex from the remaining $r - e_r(S) - 1$ edges. Lastly we choose another $k - (r - e_r(S) - 1) - |S| - 2$ vertices from the ground set. So, using the fact that $k\sim cn^{1/3}$, we see that the number of extensions that are not almost simple is at most
\mults{
rk^22^{r^2}k^{r - e_r(S) - 1} \binom{n}{k - (r - e_r(S) + |S| + 1)} \leq  k^{r - e_r(S) + 1+o(1)} \left( \frac{k}{n}\right)^{r - e_r(S) + |S| + 1}\binom{n}{k} \\
= \frac{1}{k^{r + 1 - (e_r(S) - 2|S|)+o(1)}} \binom{n}{k}
}
This is therefore at most what we claimed since $e_r(S) - 2|S| \leq \chi^*_r$. 

For the other type of not almost simple extensions we have the upper bound of
\[
rkr^2 2^{r^2} k^{r - e_r(S)} \binom{n}{k - (r - e_r(S) + |S| + 1)} \leq \frac{1}{k^{r + 1 - (e_r(S) - 2|S|)+o(1)}} \binom{n}{k}
\]
So again using $e_r(S) - 2|S| \leq \chi^*_r$ we arrive at the same conclusion as in the first case. 

Next we turn our attention to almost simple extensions for which the chosen set $S$ has $\chi_r(S) \leq \chi^*_r - 1$. Let $\nu(S)$ for $S$ a collection of vertices of degree at least two in $\mathcal{H}_r$ be the number of almost simple extensions which contain $S$ and otherwise meet each edge of $\mathcal{H}_r$ at a vertex of degree one. {\red To build such an extension we choose a vertex of degree one from the $r-e_r(S)$ edges not covered by $S$. For each such edge $E$ there are between $k-r^2$ and $k$ choices. This leaves $k-(r-e_r(S))-|S|$ vertices of $E$ to be chosen from $[n]\setminus V(\cH_r)$. }Thus,
\beq{1}{
\brac{k-r^2}^{r-e_r(S)}\binom{n-kr}{k-(r-e_r(S))-|S|}\leq \nu(S)\leq k^{r-e_r(S)}\binom{n}{k-(r-e_r(S))-|S|}.
}
Thus,  since $r=o(k^{1/2})$,
\mult{nu}{
\nu(S)=e^{O(r^2/k)} \frac{k^{2r-2e_r(S)+|S|}}{n^{r-e_r(S)+|S|}}\binom{n}{k}=e^{O(r^2/k)} \frac{k^{2r-2e_r(S)+|S|}}{(c^{-1}k)^{3(r-e_r(S)+|S|)}}\binom{n}{k}\\
=e^{O(r^2/k)} \frac{c^{3(r - e_r(S) + |S|)}}{k^{r - e_r(S) + 2|S|}}\binom{n}{k}\sim \frac{c^{3(r - e_r(S) + |S|)}}{k^{r - e_r(S) + 2|S|}}\binom{n}{k}.
}
Thus the number of almost simple extensions with $\chi_r(S) < \chi^*_r - 1$ is at most 
\[
k^{-(r - e_r(S) + 2|S|+o(1))}\binom{n}{k}\leq k^{-(r + 1+\chi_r^*+o(1))}\binom{n}{k}
\]
The total number of extensions of $\mathcal{H}_r$ is at least the number of good extensions and by our estimate on $\nu(S)$ when $\chi_r(S) = \chi_r^*$ we have the number of good extensions is at least $k^{-(r+\chi_r^* +o(1)))}\binom{n}{k}$ and the claim follows.
\end{proof}
We see that Lemma \ref{simplicityLemma} follows from Lemma \ref{GoodExtensions}.
\begin{proof}[Proof of Lemma \ref{simplicityLemma}]
If $\mathcal{H}_r$ is simple with no vertices of degree at least 3 then $\mathcal{H}_r$ has been generated by good extensions with $\mathcal{S}_r$ as the collection of independent sets of vertices of degree two.  If $\mathcal{H}_r$ has no vertices of degree at least 3 and is simple then the only good extensions of $\mathcal{H}_r$ are simple extensions, thus the probability of a non-simple extension is at most $1/k^{1 - o(1)}$.
\end{proof}

{\red Lemma \ref{GoodExtensions}  implies that w.h.p., step $r_0$ coincides exactly with the first time we take $S \in \mathcal{S}_{r}$ with $S \neq \emptyset$ and take a good extension with $S$ as the set of vertices in $\mathcal{H}_r$ of degree at least two. It follows from \eqref{nu} that $S \in \cS_r$ is selected at each step for the next good extension with probability proportional $(c^{-3})^{|S|}$. So, if $\cH_r$ has maximum degree two and $|\cS_r|=\s=\binom{r}{2}$ then 
\beq{then}{
\Pr(r_0=r+1\mid r_0>r)=\frac{e^{O(r^2/k)}\sum_{s\geq 1}\binom{\s}{s}c^{-3s}}{\sum_{s\geq0 }\binom{\s}{s}c^{-3s}}=e^{O(r^2/k)}\brac{1-\frac{1}{(1+c^{-3})^\s}}.
}
and so for $r=o(\sqrt{\log n})$,
\[
\Pr(r_0>r)\leq\prod_{\r=1}^r\brac{\frac{1}{(1+c^{-3})^{\binom{\r}{2}}}+O\bfrac{r^2}{k}}<\frac{1+O(r^3/k)}{(1+c^{-3})^{r(r+1)/2}}.
\]
Clearly a maximal intesecting family has size at least  $k$. Thus w.h.p. $r_0\leq \om=\om(n)$ for any function $\om\to \infty,\om=o(k)$.

Note that in general $\chi_{r+1}(T)\leq \chi_r(T)+1$ for $T\subseteq \cS_r$. Now for any $T \in \mathcal{S}_{r}$ so that $T \cap S = \emptyset$, with $S$ selected for the good extension to $\mathcal{H}_{r + 1}$ we observe that $\chi_{r+1}(T)=\chi_r(T)$ while for any $T \in \mathcal{S}_{r}$ such that $T\cap S\neq\emptyset$, $\chi_{r+1}(T)=\chi_r(T)+1$. Thus if $T \cap S = \emptyset$ then $ T\notin \mathcal{S}_{r + 1}$ while if $T \cap S \neq \emptyset$ then $T$ remains in $\mathcal{S}_{r + 1}$. 

Suppose now that $\cS_r$ is not intersecting and that $T\in\cS_r$. Then if $S\cap T=\emptyset$ then $T\notin \cS_{r+1}$. So,
\[
\Pr(T\in\cS_{r+1})\leq 1-\frac{e^{O(r^2/k)}c^{-3|T|}}{\sum_{S\in \cS_r} c^{-3|S|}}
\]
and so, assuming $r_1\geq r_0+\r$,
\[
\Pr(T\in\cS_{r_0+\r})\leq \prod_{i=1}^\r \brac{1-\frac{e^{O((r_0+i)^2/k)}c^{-3|T|}}{\sum_{S\in \cS_{r_0}} c^{-3|S|}}}^i\leq \exp\set{-\r\frac{e^{O((r_0+\r)^2/k)}c^{-3\binom{r_0}{2}}}{\sum_{S\in \cS_{r_0}} c^{-3|S|}}}
\]
This implies that if $\r\gg \tfrac{c^{-3\binom{r_0}{2}}}{\sum_{S\in \cS_{r_0}} c^{-3|S|}}$ then $r_1\leq r_0+\r$ w.h.p. So, without further remark we have that with high probability $r_1$ is smaller than any function of $n$ tending to infinity. This verifies part (a) of Theorem \ref{thm2}. 
}
\subsection{Step $o(\sqrt{\log n})$ to step $n^{\omega(1)}$}
We've shown that with high probability in the first $o(\sqrt{\log n})$ steps we have only made good extensions. Moreover we have also described the growing regime, diminishing regime, and stable regime for $\mathcal{S}_r$. We now want to show that conditioned on knowing the final, stable family $\mathcal{S}$, and the two hitting times $r_0$ and $r_1$ (each of which is a random positive integer that is sampled according to some asymptotic distribution) that we have $\cI^*\subseteq \cI_k \subseteq \cI^{**}$, with $\cI^*$ and $\cI^{**}$ depending on $r_0$ and $\cS$. 

In order to prove $\cI^* \subseteq \cI_k \subseteq \cI^{**}$ we first show that for any $r \geq r_1$, the probability that we add $E \in \binom{[n]}{k} \setminus \cI^{**}$ is at most $O\bfrac{1}{n^{M}}$ where $M$ can be set to be any constant.  This means that $\mathcal{H}_r \subseteq \cI^{**}$ for $n^{\omega(1)}$ steps; we handle the rest of the process in the next subsection.

We say that an edge is {\em open} after $r$ steps if it is in $\cH_r$ or if it meets $E_1,E_2,\ldots,E_r$, {\blue otherwise we say that it is closed}, and let $\cO_r$ be the set of open edges at step $r$. By what we showed in the first $o(\sqrt{\log n})$ steps we have $\cH_{r_1} \subseteq \cI^{**}$ w.h.p.  For $r \ge r_1$, all of $\cI^*$ remains open as long as we have $\cH_r \subseteq \cI^{**}$. If an edge $E$ does not meet all of $\cH_{r_1}$ then $E$ is already closed at step $r_1$. Thus the only edges that concern us here are those edges $E$ that meet all of $\cH_{r_1}$ but are not in $\cI^{**}$ because they are disjoint from some $S \in \cS$. 

 We first lower bound $|\cI^*|$. Let $S \in \cS$. Then $S$ meets $E_{r_0}, \ldots E_{r_1}$ and also $2|S|$ of the edges $E_1, \ldots E_{r_0-1}$. Thus to choose an edge $E\in \cI^*$, $E\supseteq S$ we can choose one vertex from each of the $r_0-1-2|S|$ edges of $\cH_{r_1}$ that are disjoint from $S$, and then there is no restriction on the other $k - r_0 + 1 + |S|$ vertices of $E$. So, assuming that $\cH_r\subseteq \cI^{**}$, the number of edges in $\cI^*$ at step $r$ is at least
\beq{low}{
(k-r_0)^{r_0-1-2|S|} \binom{n-r_0k}{k - r_0 + 1 + |S|} \sim k^{r_0-1-2|S|} \brac{\frac{k}{n}}^{r_0-1-|S|} \binom{n}{k} = k^{-(r_0 - 1+o(1))}\binom{n}{k}.
 }
Of course some (at most $r$) of the edges in $\cI^*$ are only open {\red because they are already in $\cH_r$}, but if $r \le n^{\om(1)}$ then this has a negligible effect on the above estimate.

Now we upper bound the number of open edges $E\notin \cI^{**}$ at some step $r$ with $r_1 \ll r \ll \sqrt{\log n}$.  To determine the number of choices for $E$ we first choose some subset $T$ of the vertices of $\mathcal{H}_r$ of degree at least two with $T \cap S = \emptyset$ for some $S \in \cS$. $T$ will be the set of vertices of degree at least two in $\mathcal{H}_r$ that belong to $E$. The number of choices for $T$ is at most $2^{r^2} = n^{o(1)}$. Once $T$ is selected there is $S \in \cS$ with $T \cap S = \emptyset$. {\red For $\ell=\ell(r_1,r_2)$ we have that $\chi_r(T) \leq r - r_0 + 1 - \ell$. Now for each $q$ with $r_1 \leq q = o(\sqrt{\log n})$ we have at least a $\frac{1}{(1+c^{3})^{r_1^2}}$ probability to take $S$ to be the set selected from $\cS$ for the good extension. (The lower bound on probability is derived as in \eqref{then}.) If $\frac{r}{(1+c^{3})^{r_1^2}}\to\infty$ then w.h.p. we pick $S$ at least $\ell$ times between steps $r_1$ and $r$, and at these steps $\chi_r(T)$ does not increase.} Thus at step $r$ we have the number of sets $E \notin I^{**}$ with $T$ as the set of vertices of degree at least two in $E$ is at most
\beq{a}{
k^{r - e_r(T)}\binom{n}{k-(|T| + r - e_r(T))} \leq k^{-(r_0 - 1 + \ell + o(1))}\binom{n}{k}.
}
So at step $r$ the number of open edges $E \notin \cI^{**}$ is at most $n^{-M} \binom{n}{k}$ where $M$ can be set as any large constant with an appropriate choice of $\ell$. However as long as $\cH_r$ is contained in $\cI^{**}$ all edges of $\cI^*$ remain open. For $r \leq o(\sqrt{\log n})$, by step $r$ we have not yet selected an edge not in $\cI^{**}$ and from then on the probability that we pick an edge outside of $\cI^{**}$ is at most $n^{-\omega(1)}$. {\red This follows from \eqref{low} and \eqref{a}.} Thus with high probability $\cH_{n^{\omega(1)}} \subseteq I^{**}$.

\subsection{From step $n^{\omega(1)}$ onward}

In this section we handle the remainder of the process. We will show that for $L$ sufficiently large (depending on $r_0$) $\cO_{n^{L}} \subseteq \cI^{**}$, which together with results in the previous subsection implies that $\cI_k \subseteq \cI^{**}$, and that in turn implies that $\cI^* \subseteq \cI_k$. Let $L = r_0 + 3$. For $E \in \binom{[n]}{k}$ and knowing $r_0$, $r_1$, and $\cS$, let 
\[\cD(E) = \{F \in \cI^* \mid F \cap E = \emptyset\}\]

Suppose $E' \in \cO_{n^L} \sm \cI^{**}$, so that $E'$ is disjoint with some set $S \in \cS$. We lower bound $|\cD(E')|$. Quite crudely, there is at least one choice for some set of vertices $S \in \cS$ that is disjoint with $E'$ and such that $|S|\le r_0$, and from here we can pick from each edge $E_1, \cdots, E_{r_0 - 1}$ not intersecting $S$ a single vertex $v_j \in E_j \setminus E'$. Such a choice always exists since $E' \cap S = 0$ and $E' \neq E_1, \cdots, E_{r_0 - 1}$. So we count edges $E \in \cD(E')$ to be at least 
\[
\binom{n-k-2r_0}{k-2r_0} \ge \frac{1}{k^{4r_0 + o(1)}} \binom{n}{k}
\]
If we pick a set from $\cD(E')$ before we pick $E'$ then $E'$ can never be selected. For $r \leq n^{\omega(1)}$ the probability at each step that we pick a set in $\cD(E')$ is at least
\[
k^{-(3r_0 + 1 + o(1))}.
\]
This estimate again uses the estimate of $k^{-r_0 + 1 + o(1)} \binom nk$ for the total number of extensions. 
Thus the probability that $E'$ remains open at step $n^L$ is at most 
\[(1 - k^{-(3r_0 + 1 + o(1))})^{n^L} \leq \exp\set{-k^{3L - 3r_0 + 1 + o(1)}}.\]
Thus, the expected number of open edges $E' \notin \cI^{**}$ at step $n^{L}$ is at most 
\[
\binom nk \exp\set{-k^{3L - 3r_0 + 1 + o(1)}} \leq \exp\set{3k \log k - k^{3L - (3r_0 + 1 + o(1))}} = o(1),
\]
since $L = r_0 + 3$.
This completes the proof of part (b) of Theorem \ref{thm2}.

\subsection{The size of $\mathcal{I}^*$ and $\mathcal{I}^{**}$}
Theorem \ref{thm2} tells us that our final intersecting family $\mathcal{I}_k$ satisfies 
\[\mathcal{I}^* \subseteq \mathcal{I}_k \subseteq \mathcal{I}^{**}.\]

While in general we do not have a complete description of $\mathcal{I}_k$, we do have enough to conclude that following
\begin{lemma}\label{EnumerationTheorem}
Conditional on the process having reached step $r_1$ with stable family $\cS_{r_1}=\cS$, we have w.h.p.
\[
\frac{|\mathcal{I}_k|}{\binom{n}{k}} = (1+o(1)) \frac{\sum_{S \in \mathcal{S}} c^{3(r_0 - 1 - |S|)}}{k^{r_0 - 1}} \qquad \mbox{ as $n \rightarrow \infty$}.
\]
\end{lemma}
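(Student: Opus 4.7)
The plan is to leverage the sandwich $\cI^* \subseteq \cI_k \subseteq \cI^{**}$ w.h.p.\ from Theorem~\ref{thm2}(b) and separately (i) compute $|\cI^*|$ and (ii) bound $|\cI_k \setminus \cI^*|$, concluding $|\cI_k|=(1+o(1))|\cI^*|$.

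For $|\cI^*|$, I enumerate over $S \in \cS$. For each $S$, since $S \subseteq J$ is an independent set of degree-two vertices in $\cH_{r_0-1}$, $S$ covers exactly $2|S|$ edges of $\cH_{r_0-1}$. An edge $E \supseteq S$ with $E \in \cI^*$ is then completed by picking one representative from each of the remaining $r_0 - 1 - 2|S|$ edges of $\cH_{r_0-1}$ ($(1+o(1))k^{r_0-1-2|S|}$ choices) and then $k - r_0 + 1 + |S|$ vertices freely. Using $n = k^3/c^3$,
\[
(1+o(1))\,k^{r_0-1-2|S|}\binom{n}{k-r_0+1+|S|} = (1+o(1))\frac{c^{3(r_0-1-|S|)}}{k^{r_0-1}}\binom{n}{k}.
\]
Summing over $\cS$ gives the stated main term. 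The overcount from edges containing two distinct $S_1, S_2 \in \cS$ is handled by applying the general $\nu(T)$-estimate~\eqref{nu} to $T = S_1 \cup S_2$: because $\cS$ is intersecting, $|T| \ge \max(|S_1|,|S_2|)+1$, yielding a saving of at least $k^{-1}$. Since $|\cS| = O(1)$, the overcount is $o(|\cI^*|)$.

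For the matching upper bound, I argue $|\cI_k \setminus \cI^*| = o(|\cI^*|)$. Any $E \in \cI_k \setminus \cI^*$ lies in $\cI^{**}$ and additionally meets every $E_j$ with $r_0 \le j \le r_1$ (because each such $E_j$ is itself in $\cI_k$). Partitioning such $E$ by $T := E \cap J$, we have that $T$ is a hitting set for $\cS$ that does not contain any $S \in \cS$. If $T$ is not independent in $\cH_{r_0-1}$, then $\chi(T) \le -1$ and the $\nu(T)$-estimate directly produces an extra $k^{-1}$ factor. If $T$ is independent but not in $\cS$, then $T$ fails to meet some $E_j$ with $r_0 \le j \le r_1$; since $E$ must nevertheless meet $E_j$, it must do so via a vertex in $E_j \setminus J$, imposing an additional specified-edge-membership constraint that again produces a $k^{-1}$ factor. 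Because there are only $O(1)$ possible $T$, summing gives $|\cI_k \setminus \cI^*| = o(|\cI^*|)$, and combined with $|\cI^*|\le|\cI_k|$ this yields $|\cI_k|=(1+o(1))|\cI^*|$, which is the stated asymptotic.

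The main obstacle, I expect, is the second subcase of the upper bound: arguing that independent hitting sets $T \notin \cS$ are suitably penalized. The crucial observation is that the intersecting property of $\cI_k$ forces hitting constraints with respect to $E_{r_0},\ldots,E_{r_1}$ that are strictly stronger than the $\cI^{**}$-defining condition, and translating this into the extra $k^{-1}$ factor in the count requires a careful case analysis of how $T \subseteq J$ can fail to belong to $\cS$ while still hitting every $S \in \cS$.
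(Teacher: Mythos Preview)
Your lower-bound computation of $|\cI^*|$ has a genuine gap. The count you write for ``edges $E\supseteq S$ in $\cI^*$'', namely $(1+o(1))k^{r_0-1-2|S|}\binom{n}{k-r_0+1+|S|}$, is \emph{not} the number of $k$-sets containing $S$ and meeting every edge of $\cH_{r_0-1}$: it misses those $E$ that use a further vertex of $J\setminus S$ to cover two of the ``remaining'' $E_i$'s simultaneously. For instance when $r_0=5$, $S=\{a_{12}\}$, an edge $E\ni a_{34}$ already meets both $E_3$ and $E_4$ with a single vertex, and your representative-per-edge count does not produce it. This is a $\Theta(1)$ discrepancy, not $o(1)$. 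Your inclusion--exclusion claim fails for the same reason: if $T=S_1\cup S_2$ is independent in $\cH_{r_0-1}$ (which certainly occurs --- indeed $\cS$ is upward-closed among independent subsets of $J$, so any such $T$ is again in $\cS$), then in the $\nu$-formula the $k$-exponent $r-e_r(T)+2|T|$ equals $r_0-1$, exactly as for $S_1$; only the $c$-power changes. There is no $k^{-1}$ saving.

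The paper sidesteps this by never forming $\bigcup_{S\in\cS}\{E:E\supseteq S\}$ at all. It works at step $r_1$ rather than $r_0-1$ and, for each $S\in\cS$, counts only those $E$ whose set of degree-$\ge 2$ (in $\cH_{r_1}$) vertices is \emph{exactly} $S$, i.e.\ $E$ meets every edge of $\cH_{r_1}$ not covered by $S$ at a degree-one vertex. These families are pairwise disjoint by construction, each has size $(1+o(1))c^{3(r_0-1-|S|)}k^{-(r_0-1)}\binom{n}{k}$, and summing gives the lower bound directly. The matching upper bound is obtained by the same partition, using that $\chi_{r_1}(T)\le r_1-r_0+1$ for every $T$ with equality iff $T\in\cS$. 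Your alternative route to the upper bound (bounding $|\cI_k\setminus\cI^*|$ via the extra hitting constraints coming from $E_{r_0},\dots,E_{r_1}$) is sound in outline, but for the lower bound you need this disjoint decomposition rather than inclusion--exclusion over $\cS$.
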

\begin{proof}
It suffices to find a lower bound on $\mathcal{I}^*$ and an upper bound on $\mathcal{I}^{**}$. Both approximations are based largely on the fact that in the growing regime every $S \in \mathcal{S}_r$, $r \leq r_0 - 1$ has $\chi_r(S) = 0$ and in the diminishing regime and stable regime every $S \in \mathcal{S}_r$, $r_0 \leq r$ has $\chi_r(S) = r - r_0 + 1$. Thus $\chi_{r_1}(S) = r_1-r_0 + 1$. We now bound $|\mathcal{I}^{**}|$. For any set $T$ of vertices of degree at least two in $\mathcal{H}_{r_1}$ we have $\chi_{r_1}(T) \leq r_1-r_0 + 1$ with $\chi_{r_1}(T) = r_1-r_0 + 1$ if and only if $T \in \mathcal{S}$. For each choice of $T$ the number of hyperedges of $\mathcal{I}^{**}$ so that $T$ is the set of vertices of degree at least two in $\mathcal{H}_{r_1}$ belonging to the hyperedge is at most
\[k^{r_1 - e(T)} \binom{n}{k - (|T| + r_1 - e_{r_1}(T))}.\]
We sum this over the at most $2^{r^2}$ choices for $T$, but the largest order terms come from $T \in \mathcal{S}$, so we have 
\begin{eqnarray*}
|\mathcal{I}^{**}| &\leq& (1 + o(1))\sum_{S \in \mathcal{S}} \frac{c^{3(r_1 - (e_{r_1}(S) - |S|))}}{k^{r_1 - (e_{r_1}(S) - 2|S|))}}\binom{n}{k} \\
&=& (1 + o(1)) \sum_{S \in \mathcal{S}} \frac{c^{3(r_0 - 1 - |S|))}}{k^{r_0 - 1}}\binom{n}{k},
\end{eqnarray*}
after using $\chi_{r_1}(S) = r_1-r_0 + 1$.

On the other hand we have that for any fixed $S \in \mathcal{S}$ the number of sets that contain $S$ and meet all other sets in $\mathcal{H}_{r_1}$ in a vertex of degree one is at least 
\[(k - (r_1)^2)^{r_1 - e(S)} \binom{n - kr_1}{k - (|S| + r_1 - e_{r_1}(S))} = (1 - o(1)) \frac{c^{r_0 - 1 - |S|}}{k^{r_0 - 1}} \binom{n}{k}.\]
We can sum this over all choices of $S \in \mathcal{S}$ to arrive at the conclusion.
\end{proof}
This completes the proof of part (c) of Theorem \ref{thm2}.

We finally consider part (d). The distribution of $r_0$ can be recovered from the following generating function. 

\begin{theorem}\label{GeneratingFunction}
Consider the random intersecting process with {\red $k = \rdown{cn^{1/3}}$ where $c>0$ is a constant} and let $X$ be the random variable counting the number of steps until a vertex of degree three appears, then for $n \rightarrow \infty$ the following sequence
\[{\red
\left\{ \frac{\lim_{n\to\infty}\Pr(X = r + 1 \mid X > r)}{\lim_{n\to\infty}\Pr(X > r + 1 \mid X > r)} \right\}_{r \geq 0}}
\]
has exponential generating function 
\[e^x(e^{x^2/(2c^3)} - 1).\]
\end{theorem}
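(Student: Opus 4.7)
The plan is to turn the ratio into a combinatorial count of valid extensions of $\mathcal{H}_r$, evaluate it using Lemma~\ref{number} and the extension estimate \eqref{nu}, and then recognize the resulting sequence as the coefficients of the claimed generating function after a standard reindexing.

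The first step is to condition on $X > r$. By Lemma~\ref{simplicityLemma}, for any fixed $r$ the hypergraph $\mathcal{H}_r$ is w.h.p.\ simple, so every vertex of degree at least two in fact has degree exactly two, and $\chi_r^\ast = 0$ with the maximum attained on precisely the independent sets $S$ of degree-two vertices (including $S=\emptyset$). A new edge $E_{r+1}$ creates a vertex of degree three if and only if the set $S$ of degree-$\ge 2$ vertices of $\mathcal{H}_r$ that it contains is non-empty. Writing $N_m(r)$ for the number of open extensions of $\mathcal{H}_r$ with $|S|=m$, the quantity I need to compute is
\[
a_r \;:=\; \lim_{n\to\infty}\frac{\Pr(X=r+1\mid X>r)}{\Pr(X>r+1\mid X>r)} \;=\; \lim_{n\to\infty}\frac{\sum_{m\ge 1}N_m(r)}{N_0(r)}.
\]

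Next, each independent set $S$ of $m$ degree-two vertices has $e_r(S)=2m$, so by \eqref{nu}
\[
\nu(S)\;\sim\;\frac{c^{3(r-m)}}{k^{r}}\binom{n}{k},
\]
while Lemma~\ref{number} counts exactly $\tfrac{r!}{(r-2m)!\,m!\,2^m}$ such $S$. Hence
\[
N_m(r)\;\sim\;\frac{r!}{(r-2m)!\,m!\,2^m}\cdot\frac{c^{3(r-m)}}{k^r}\binom{n}{k},
\]
and cancelling the common factor $c^{3r}k^{-r}\binom{n}{k}$ in both numerator and denominator leaves
\[
a_r \;=\; \sum_{m\ge 1}\frac{r!}{(r-2m)!\,m!\,2^m}\,c^{-3m}.
\]

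To finish, I form the exponential generating function: dividing by $r!$, swapping the order of summation, and substituting $s=r-2m$ gives
\[
\sum_{r\ge 0}\frac{a_r}{r!}\,x^r \;=\; \sum_{m\ge 1}\frac{1}{m!\,(2c^3)^m}\sum_{s\ge 0}\frac{x^{s+2m}}{s!} \;=\; e^x\bigl(e^{x^2/(2c^3)}-1\bigr),
\]
as claimed. The only point that really needs justification is that extensions which are not almost simple (or which contain a non-independent set of degree-$\ge 2$ vertices, which is impossible under $X>r$) contribute at strictly lower order than $N_0(r)$; this is the content of Lemma~\ref{GoodExtensions}, whose bound of $k^{-(r+1-o(1))}\binom{n}{k}$ on such extensions is a factor $k^{-1+o(1)}$ smaller than $N_0(r)\sim c^{3r}k^{-r}\binom{n}{k}$ and hence negligible in the limiting ratio. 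No new technical machinery will be needed beyond what has already been developed in the preceding subsections.
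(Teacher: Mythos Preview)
Your proposal is correct and follows essentially the same route as the paper: condition on simplicity via Lemma~\ref{simplicityLemma}, count extensions using Lemma~\ref{number} together with the estimate \eqref{nu}, take the ratio, and identify the resulting sum as the coefficients of $e^x(e^{x^2/(2c^3)}-1)$. The only place where the paper is slightly more explicit is in isolating a separate sub-lemma showing that replacing the conditioning $\{X>r\}$ by $\{X>r,\ SIMPLE(r)\}$ leaves the ratio asymptotically unchanged, which you subsume into your final paragraph's appeal to Lemmas~\ref{simplicityLemma} and~\ref{GoodExtensions}.
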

\begin{proof}
By Lemma \ref{simplicityLemma} we have that if $r = o(\sqrt{\log n})$ {\red then w.h.p. either $\mathcal{H}_r$ has no vertices of degree at least three or it is a simple hypergraph}. We first verify
\begin{lemma}
\beq{x1}{
\frac{\Pr(X = r + 1 \mid X > r)}{\Pr(X > r + 1 \mid X > r)}
}
is asymptotically equal to
\beq{x2}{
\frac{\Pr(X = r + 1 \mid X > r ,SIMPLE(r))}{\Pr(X > r + 1 \mid X > r ,SIMPLE(r))}.
}
\end{lemma}
\begin{proof}
The ratio of the expressions in \eqref{x1}, \eqref{x2} is 
\beq{x3}{
\frac{\Pr(X=r+1,X>r,SIMPLE(r))}{\Pr(X=r+1,X>r)}\times \frac{\Pr(X>r+1)}{\Pr(X>r+1,SIMPLE(r))}.
}
Now we bound the probability $p_r$ that $X > r$ holds but that $SIMPLE(r)$ fails to hold. Clearly 
\[
p_1=\Pr(X > 1 , \neg SIMPLE(1)) \leq \Pr(\neg SIMPLE(1)) = 0.
\]
For $1<r =o(\sqrt{\log n})$ we have
\begin{align*}
p_r&=\Pr(X > r , \neg SIMPLE(r))\\
&= \Pr(X > r , \neg SIMPLE(r) , SIMPLE(r - 1)) + \Pr(X > r , \neg SIMPLE(r - 1)) \\
&\leq \Pr(\neg SIMPLE(r) \mid X > r-1 , SIMPLE(r - 1)) + \Pr(X > r - 1 , \neg SIMPLE(r - 1))\\
&\leq \frac{1}{k^{1-o(1)}}+p_{r-1},
\end{align*}
after using Lemma \ref{simplicityLemma} to bound the first summand. Thus $p_r=O(\sqrt{\log n}/k^{1-o(1)})=O(1/k^{1-o(1)})$. 

Going back to \eqref{x3},
\[
1-\frac{\Pr(X>r,\neg SIMPLE(r))}{\Pr(X=r+1,X>r)}\leq \frac{\Pr(X=r+1,X>r,SIMPLE(r))}{\Pr(X=r+1,X>r)}\leq 1
\]
It follows then that
\[\Pr(X = r +1) \geq  \Pr(X = r +1, SIMPLE(r)).\]
We show later in the proof of Theorem \ref{GeneratingFunction} that 
\[\Pr(X = r + 1, SIMPLE(r)) = \zeta_r\]
for some $\zeta_r > 0$.  So with our estimate on $p_r$ we have that 
\[
\frac{\Pr(X=r+1,X>r,SIMPLE(r))}{\Pr(X=r+1,X>r)}=1-O(k^{-(1-o(1))}).
\]
A similar argument deals with the second product in \eqref{x3}.
\end{proof}
If $\mathcal{H}_r$ is a simple $k$-uniform intersecting hypergraph with maximum degree two then {\red each pair of edges of $\mathcal{H}_r$ intersect in a unique vertex of degree two}. In this case $\cH_{r + 1}$ will have vertices of degree three if and only if we select a nonempty set $S$ of the vertices of degree two of $\mathcal{H}_r$ to belong to the new edge. The event that $S$ is not an independent set of vertices of degree two is negligible as is the event that the extension of $\mathcal{H}_r$ to $\mathcal{H}_{r + 1}$ is not almost simple by Lemma \ref{GoodExtensions}. Thus the number of almost simple extensions that do not add a vertex of degree three is 
\beq{non}{
(1+o(1)) k^r \binom{n}{k - r} \sim k^r \left(\frac{c^3}{k^2}\right)^{r} \binom{n}{k}.
}
On the other hand to count the number of almost simple extensions that do add a vertex of degree three we have a choice of how many vertices of degree three we add. To add $m$ vertices of degree three we must (with high probability) select an independent set of $m$ vertices of degree two in $\mathcal{H}_r$. 

Applying Lemma \ref{number} we see that the number of almost simple extensions of $\mathcal{H}_r$ that create vertices of degree three is 
\beq{do}{
(1+o(1)) \sum_{m = 1}^{\lfloor r/2 \rfloor} \frac{r!}{(r - 2m)! m! 2^m} k^{r - 2m} \binom{n}{k - (r - 2m) - m} \sim \sum_{m = 1}^{\lfloor r/2 \rfloor} \frac{r!}{c^{3m}(r - 2m)!m!2^m} k^r \left(\frac{c^3}{k^2}\right)^r \binom{n}{k}.
}

Thus from \eqref{non} and \eqref{do}, we have for each $r= o(\sqrt{\log n})$, 
\[
\frac{\Pr(X = r + 1 \mid X > r)}{\Pr(X > r + 1 \mid X > r)} \sim  \sum_{m = 1}^{\rdown{r/2}} \frac{r(r - 1) \cdots (r - 2m + 1)}{m! (2c^3)^m}\sim \sum_{m = 1}^{\infty} \frac{r(r - 1) \cdots (r - 2m + 1)}{m! (2c^3)^m}
\]
as $r\to\infty$.

Now we just verify the exponential generating function is what we claimed:
\begin{eqnarray*}
\sum_{r = 0}^{\infty} \sum_{m = 1}^{\infty} \frac{r(r - 1) \cdots (r - 2m + 1)}{r!m!(2c^3)^m} x^r &=& \sum_{m = 1}^{\infty} \sum_{r = 2m}^{\infty} \frac{x^r}{(r - 2m)! m!(2c^3)^m} \\
&=& \sum_{m = 1}^{\infty} \frac{(x^2/(2c^3))^m}{m!} \sum_{r = 2m}^{\infty} \frac{x^{r - 2m}}{(r - 2m)!} \\
&=& \sum_{m = 1}^{\infty} \frac{(x^2/(2c^3))^m}{m!} \sum_{s = 0}^{\infty} \frac{x^s}{s!} \\
&=& \left(\exp\set{x^2/(2c^3)} - 1 \right) e^{x}.
\end{eqnarray*}
\end{proof}
\begin{example}
Using this generating function and Theorem \ref{thm2} we can recover two previous results. Using the generating function we have that $\Pr(r_0 = 3) = \frac{1}{1 + c^3}$. In the case that $r_0 = 3$ we see that $\mathcal{S}$ is consists of just a singleton set and it stabilizes at the third step of the process, so $r_1 = 3$. In this case $\cI^* = \cI^{**}$ and is the star centered at that first vertex of degree three. So this recovers the $c_n \rightarrow c$ case of Theorem \ref{main}.

Moreover we can also recover the Hilton--Milner-type statement of Patk\'os from our methods. If $r_0 = 4$ then at the fourth step of the process we have three edges that all contain a single common vertex and there is a fourth edge not containing that vertex but meeting all three edges. In this case $\cI^* = \cI^{**}$ and is a Hilton--Milner type system. We have
\[\Pr(r_0= 4) = \Pr(r_0 = 4 \mid r_0 > 3)\Pr(r_0> 3) = \Pr(r_0 = 4 \mid r_0 > 3)\Pr(r_0 \neq 3).\]
Using the generating function
\[\Pr(r_0 = 4) = \left(\frac{3}{c^3 + 3}\right)\left(\frac{c^3}{1 + c^3}\right),\]
and so we recover Corollary 1.6 of \cite{Pat}.

\end{example}

Naively we might expect that $r_0$ determines $\mathcal{S}$ which determines the final intersecting family. This however is only the case when $r_0$ happens to be small. When $r_0 \leq 6$ it isn't too difficult to see that $\mathcal{S}$ must stabilize to be a star, i.e. it will stabilize as all independent sets of vertices of degree two in $\mathcal{H}_{r_0 - 1}$ that contain some fixed vertex. If $r_0 = 7$ however, we see that we can get a $\mathcal{S}$ that is not a star. If $r_0 = 7$ it is possible that $\mathcal{S} = \{\{u, v\}, \{v, w\}, \{u, w\}, \{u, w, v\}\}$ for $\{u, v, w\}$ an independent set of vertices of degree two in $\mathcal{H}_6$.

This completes the proof of Theorem \ref{thm2}.
\section{Proof of Theorem \ref{th2} when $k =\Theta(n)$}\label{constant} 

In this section we assume $k=cn$ where $c=c(n) \ge \z$ for some constant $\z>0$.
Let $G=K(n, k)$ be the Kneser graph. Recall that $G$ has vertex set $\brac{v_S: S \in \binom{[n]}{k}}$, and $v_S$ is adjacent to $v_S'$ when $S \cap S' = \es$. 

Let $g(x):= x \log (x)$ for $x>0$ and let $g(0)=0$. Stirling's formula gives us that for any constants $a\ge b\ge 0$ we have
\begin{equation}
    \binom{an}{bn} = \exp\{ [g(a) -g(b) - g(a-b)]n + O(\log n)\}.
\end{equation}
Thus $G$ has $N$ vertices and is $d$-regular for
\begin{align*}
    N &= \binom{n}{cn} = e^{p_N(c)n+O(\log n)}\qquad\text{ where }p_N(c) =  -g(c) - g(1-c),\\
    d &= \binom{n-cn}{cn} = e^{p_d(c)n+O(\log n)}\qquad\text{ where }p_d(c) = g(1-c) -g(c) - g(1-2c).
\end{align*}
We claim that $p_d(c)<p_N(c)$ for all $0<c<1/2$. Indeed,  $p_d(0)=p_N(0)=0$. Now since $g'(x)=1+\log x$ we have
\[
\frac{d}{dc}(p_N(c) - p_d(c)) = \frac{d}{dc}(-2g(1-c)+g(1-2c)) = 2\log(1-c) - 2\log(1-2c)>0.
\]
Thus, 
\beq{eps1}{
N^{\e_1} < d < N^{1-\e_1}
}
for some $\e_1=\e_1(c)>0$. 

Now we claim that 
\beq{eps2}{
\text{ each vertex has codegree at most $dN^{-\e_2}$ with all but at most $dN^{-\e_2}$ vertices, for some $\e_2=\e_2(c)$.}
}
 Indeed, suppose $|S \cap S'| \le (c-\d)n$ for some $\d>0$. Then $|S \cup S'| \ge (c+\d)n$, and so the number of sets $S''$ that are disjoint with both $S$ and $S'$ (i.e. the codegree of $v_S, v_{S'}$) is at most 
\begin{equation}\label{eqn:codegbound}
    \binom{n-(c+\d)n}{cn} = e^{p_1(c,\d)n+O(\log n)}\qquad\text{ where }p_1(c,\d)=g(1-c-\d) -g(c) - g(1-2c-\d)]n.
\end{equation}
We claim that $p_1(c, \d)<p_d(c)$ for some $\d=\d(c)>0$. Indeed, we have $p_1(c, 0)=p_d(c)$ and
\[
\frac{\partial p_1}{\partial \d}(c, 0) = -\log(1-c)+\log(1-2c) < 0.
\]
 Thus the codegree bound on line \eqref{eqn:codegbound} is at most $dN^{-\e_2}$ for some $\e_2 = \e_2(c)$. 

Now for a fixed set $S$, the number of $S'$ such that $|S \cap S'| > (c-\d)n$ is at most 
\[
\binom {cn}{(c-\d)n} \binom{n}{\d n} = e^{p_2(c,\d)n+O(\log n)}\qquad\text{ where }p_2(c,\d)=g(c) - g(c-\d) -2g(\d) - g(1-\d).
\]
We claim that for sufficiently small $\d$ we have $p_2(c, \d) < p_d(c)$. This follows from the fact that $p_d(c)>0$, $p_2(c, 0)=0$ and $p_2$ is continuous in $\d$. This verifies \eqref{eps2}.

We let $\e=\min\set{\e_1,\e_2}$ for the remainder of Section \ref{constant}. Both of \eqref{eps1} and \eqref{eps2} remain true with $\e_1$ and $\e_2$ replaced by $\e$ respectively.
\subsection{The good event}\label{sec:GE}
Let $G$ be any $d$-regular graph on $N$ vertices where $N^\e < d < N^{1-\e}$.  Assume that for each vertex $v$, there are at most $dN^{-\e}$ vertices $u$ whose codegree with $v$ is at least $dN^{-\e}$. We run the random greedy independent set process on $G$: initially $I(0)=\emptyset$. After $r$ steps, let $I(r) = \{v_1, \ldots, v_r\}$ be the set of vertices in the independent set constructed so far. Let $V(r)$ be the set of available vertices (i.e. the vertices not adjacent to any vertex in $I(r)$). We choose a random vertex $v\in V(r)$ and put $v_{r+1}=v$. 

Let $D_v(r)$ be the number of available neighbors of $v$, for $v \in V(r)$.\\
Let $B(r)$ be the set of vertices $w \in V(r)$ such that some vertex $u \in I(r)$ has codegree with $w$ at least $dN^{-\e}$.\\
Let $C_v(r) := D_v(r) \cap B(r)$.

Let 
\[
t=t(r) = \frac{dr}{N}.
\]
Define the error function
\begin{equation}
  f(t):=  N^{-\e/20} e^{10t}.
\end{equation}
Let the {\it good event} $\cE_r$ be the event that the following inequalities \eqref{eqn:Vest}, \eqref{eqn:Dest} and \eqref{eqn:Cbound} all hold for all $r' \le r$ and where $t'=t(r')$:
\begin{align}
\card{|V(r')| - Ne^{-t'}} &\le Nf(t') \label{eqn:Vest}\\
\card{|D_v(r')| - de^{-t'}} &\le df(t') \mbox{ for all $v \in V(r') \sm B(r')$} \label{eqn:Dest}\\
|C_v(r')| &\le dN^{-\e/10} \mbox{ for all $v \in V(r')$}. \label{eqn:Cbound}
\end{align}

We will show that w.h.p. the good event $\cE_{r_{end}}$ holds where 
\[
r_{end}:= \frac{\e}{1000} \frac{N \log N}{ d}.
\]
In particular this will imply that w.h.p. the process lasts to at least step $r_{end}$, proving the lower bound on $|\cI_k|$ in Theorem \ref{th2} for the case of $k=cn$. Indeed, 
$$e^{10t}\leq N^{\e/1000}\text{ implying that }f(t) \le N^{-\e/25}=o(1)$$ 
for all $r \le r_{end}$, inequalities \eqref{eqn:Vest} and \eqref{eqn:Dest} imply that for all $r \le r_{end}$ we have
\[
|V(r)| = (1+o(1))Ne^{-t}, \qquad |D_v(r)| = (1+o(1))de^{-t} \mbox{ for all $v \in V(r) \sm B(r).$}
\]
In particular $|V(r_{end})|$ is positive and so the process lasts to step $r_{end}$. The upper bound in Theorem \ref{th2} comes from the upper bound on $|V(r_{end})|$ implied by \eqref{eqn:Vest}.

We will sometimes use the bounds (which hold for all $r \le r_{end}$ in the good event $\cE_r$)
\begin{equation}
    |V(r)| \ge{ N^{1-\e/10}}, \qquad |B(r)| \le N^{1-\e/2}.
\end{equation}
The first bound follows from our estimate of $V(r_{end})$. The second bound follows since $|B(r_{end})| \le r_{end} dN^{-\e}$.

\subsection{Dynamic concentration of $V(r)$}
Here we bound the probability that $\cE_{r_{end}}$ fails due to condition \eqref{eqn:Vest}. We define variables $V^+$ and $V^-$ as follows. 
\beq{V+-}{
V^{\pm}=V^{\pm}(r):=
\begin{cases} 
|V(r)|- N ( e^{-t} \pm f(t) ) & \text{if $\cE_{i-1}$ holds},\\
V^{\pm}(r-1) & \text{otherwise}.
\end{cases}
}
Note that if $\cE_{r_{end}}$ fails due to condition \eqref{eqn:Vest} then we either have that $V^+(r_{end})>0$ or $V^-(r_{end})<0$. To show that those events are unlikely we will establish $V^+$ is a supermartingale, i.e. $\E[\D|V^+(r)| \;| I(r)] \le 0$. Similarly we will show that $V^-$ is a submartingale. 

First we show $\E[\D|V^+(r)| \;| I(r)] \le 0$. If $\cE_r$ fails then $\D V^+(r)= 0$ by definition, so we assume $\cE_r$ holds. We have
\begin{align}
     \E[\D|V(r)| \;| I(r), \cE_r] &=  - \frac{1}{|V(r)|}\sum_{v \in V(r)} (1+D_v(r)) \label{eqn:1stepV}\\
     & \le -\frac{1}{|V(r)|} \brac{|V(r)| - |B(r)|} \brac{de^{-t} - df(t)}\nn\\
     & \le -\brac{1- \frac{N^{1-\e/2}}{N^{1-\e/10}}}\brac{de^{-t} - df(t)}\nn\\
     & \le -d \brac{e^{-t} - f(t) } + O\brac{dN^{-\e/3}}.\nn
\end{align}
We turn to bounding the one-step change in $N ( e^{-t} + f(t) )$ (the deterministic part of $V^+$). We use Taylor's thorem:
\begin{theorem}
Let $g:\mathbb{R}\to\mathbb{R}$ be a function twice differentiable on the closed interval $[a,b]$. Then, there exists a number $\tau$ between $a$ and $b$ such that 
\begin{equation}\label{eq:taylor}
g(b) -g(a)=  g'(a)(b-a) + \frac{g''(\tau)}{2} (b-a)^2.
\end{equation}
\end{theorem}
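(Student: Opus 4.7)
The approach is the classical Cauchy trick: construct an auxiliary function on $[a,b]$ whose values at the two endpoints agree, so that Rolle's theorem supplies the mean-value point $\tau$. Specifically, I would define
$$h(x) := g(b) - g(x) - g'(x)(b-x) - M(b-x)^2,$$
and choose the constant $M$ so that $h(a) = 0$; solving $h(a) = 0$ for $M$ amounts to asserting exactly the identity we want to prove, with $g''(\tau)/2$ replaced by $M$. One checks immediately that $h(b) = 0$ as well, and that $h$ is continuous on $[a,b]$ and differentiable on $(a,b)$ under the hypothesis on $g$.

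By Rolle's theorem there exists $\tau \in (a,b)$ with $h'(\tau) = 0$. Direct computation using the product rule gives
$$h'(x) = -g'(x) + g'(x) - g''(x)(b-x) + 2M(b-x) = (b-x)\bigl(2M - g''(x)\bigr),$$
so $h'(\tau) = 0$ together with $\tau \neq b$ forces $M = g''(\tau)/2$. Substituting this back into the defining relation $h(a) = 0$ and rearranging yields exactly $g(b) - g(a) = g'(a)(b-a) + \tfrac{g''(\tau)}{2}(b-a)^2$, as required.

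There is essentially no obstacle here beyond writing down the auxiliary function: once $h$ is in hand the argument is a one-line application of Rolle's theorem plus a routine derivative computation. The only place where a sign or a term could be mishandled is the cancellation in $h'$ between $-g'(x)$ and the derivative of $-g'(x)(b-x)$, so I would double-check the product rule step carefully, but no deeper idea is needed.
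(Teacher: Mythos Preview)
Your proof is correct; this is the standard auxiliary-function argument for Taylor's theorem with Lagrange remainder. Note that the paper itself does not prove this statement at all: it is quoted as the classical Taylor theorem and used as a black box, so there is no ``paper's proof'' to compare against.
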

Since $\D t(r) = \frac dN$, we have for some $\tau \in (t(r), t(r+1))$ that
\begin{align*}
    \D N ( e^{-t} \pm f(t) ) &= N(-e^{-t} \pm f'(t)) \cdot \frac{d}{N} + \frac 12 N(e^{-\tau} \pm f''(\tau)) \cdot \frac{d^2}{N^2}\\
    &= d(-e^{-t} \pm f'(t)) + O\brac{\frac{d^2}{N}}.
\end{align*}
 Thus we have 
\begin{align}
     \E[\D V^+(r) \;| I(r), \cE_r] & \le -d \brac{e^{-t} - f(t) } -d(-e^{-t} + f'(t))+O\brac{dN^{-\e/3}}\label{24}\\
     & = d(f(t) - f'(t)) + O\brac{dN^{-\e/3}}\nn\\
     &= -\Omega\brac{dN^{-\e/20}}.\nn
\end{align}
 Thus $V^+$ is a supermartingale. Now we will show that $V^-$ is a submartingale. From \eqref{eqn:1stepV} we have
 \begin{align}
     \E[\D|V(r)| \;| I(r), \cE_r] &=  - \frac{1}{|V(r)|}\sum_{v \in V(r)} (1+D_v(r))\\
     & \ge -1-\frac{1}{|V(r)|} \Bigg[\brac{|V(r)| - |B(r)|} \brac{de^{-t} + df(t)} + |B(r)|d \Bigg]\nn\\
     & \ge -\brac{de^{-t} + df(t)} - \frac{N^{1-\e/2}}{N^{1-\e/10}}d-1\nn\\
     & \ge -\brac{de^{-t} + df(t)} +O\brac{dN^{-\e/3}}. \nn
\end{align}
Thus, similar to \eqref{24}, we have
\begin{align}
     \E[\D V^-(r) \;| I(r), \cE_r] & \ge -d \brac{e^{-t} + f(t) } -d(-e^{-t} - f'(t))+O\brac{dN^{-\e/3}}\\
     &= \Omega\brac{dN^{-\e/20}}.\nn
\end{align}
We use the following concentration inequality.

\begin{theorem}[Freedman]
Suppose $Y_0,Y_1,\dots$ is a supermartingale such that $\Delta Y_j \le C$ for all $j$, and let $W_m =\displaystyle \sum_{k \le m} \Var [ \Delta Y_k| \mathcal{F}_{k}]$.  Then, for all positive reals~$\lambda$,
\begin{equation}\label{eqn:freedman}
   \Pr(\exists m: W_m \le b \text{ and } Y_m - Y_0 \geq \lambda) \leq \displaystyle \exp\left(-\frac{\lambda^2}{2(b+C\lambda) }\right). 
\end{equation}

\end{theorem}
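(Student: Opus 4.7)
The plan is to use the classical exponential supermartingale technique. For a parameter $\theta>0$ to be optimized, I would define
\[
M_m := \exp\bigl(\theta(Y_m - Y_0) - \theta^2 h(\theta C)\, W_m\bigr),
\]
where $h(u) := (e^u - 1 - u)/u^2$ (continuously extended by $h(0)=1/2$), and show that $(M_m)_{m\ge 0}$ is a nonnegative supermartingale with $M_0=1$. Given this, the bound follows by applying optional stopping at $\tau := \inf\{m : W_m > b \text{ or } Y_m - Y_0 \ge \lambda\}$, combined with Markov's inequality and an optimization in $\theta$.

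\textbf{Supermartingale property and stopping argument.} The supermartingale property rests on the elementary inequality $e^{\theta x} \le 1 + \theta x + \theta^2 x^2 h(\theta C)$ for $x \le C$, which holds by the monotonicity of $h$ applied to $u = \theta x \le \theta C$. Taking conditional expectation, using $\Delta Y_k\le C$ together with the supermartingale hypothesis $\E[\Delta Y_k\mid \mathcal{F}_{k-1}] \le 0$, one obtains
\[
\E[e^{\theta \Delta Y_k} \mid \mathcal{F}_{k-1}] \le \exp\bigl(\theta^2 h(\theta C)\, \Var[\Delta Y_k\mid \mathcal{F}_{k-1}]\bigr),
\]
which is exactly $\E[M_k\mid \mathcal{F}_{k-1}] \le M_{k-1}$. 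Optional stopping applied to $M_{\tau\wedge n}$, followed by Fatou's lemma (valid since $M\ge 0$), yields $\E[M_\tau\,;\,\tau<\infty] \le 1$. On the event $\{\exists m: W_m\le b,\, Y_m - Y_0\ge \lambda\}$ we have $\tau<\infty$, $W_\tau\le b$ and $Y_\tau - Y_0\ge \lambda$, so $M_\tau\ge \exp(\theta\lambda - \theta^2 h(\theta C) b)$; Markov's inequality now gives
\[
\Pr(\exists m: W_m\le b,\, Y_m-Y_0\ge\lambda) \le \exp\bigl(-\theta\lambda + \theta^2 h(\theta C)\, b\bigr).
\]
Using the standard Taylor bound $h(u) \le (2(1 - u/3))^{-1}$ on $0\le u<3$ and choosing $\theta = \lambda/(b + C\lambda/3)$ collapses the exponent to $-\lambda^2/(2(b+C\lambda/3))$, which is at least as strong as the claimed $-\lambda^2/(2(b+C\lambda))$.

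\textbf{Main obstacle.} The delicate step is the exponential moment bound displayed above in the \emph{super}martingale case: the naive computation only yields the bound with the conditional second moment $\E[(\Delta Y_k)^2\mid \mathcal{F}_{k-1}]$ in place of the conditional variance, and one needs the nonpositive drift $\mu_k := \E[\Delta Y_k\mid \mathcal{F}_{k-1}]$ to close the gap of size $\theta^2 h(\theta C)\mu_k^2$. I would handle this via the Doob decomposition $Y_k = M'_k + A_k$, with $M'$ a martingale and $A$ nonincreasing and predictable: since then $Y_k - Y_0 \le M'_k - M'_0$ (so the deviation event reduces to one on the martingale part) and $\Var[\Delta Y_k\mid \mathcal{F}_{k-1}] = \Var[\Delta M'_k\mid \mathcal{F}_{k-1}]$ (so $W_m$ is unchanged), the problem reduces to the clean martingale version of the exponential bound. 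Everything else is standard Chernoff-type packaging.
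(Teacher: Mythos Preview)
The paper does not prove this statement: it is quoted as Freedman's inequality and used as a black box in the martingale arguments that follow. Your exponential-supermartingale approach is exactly the standard proof, so there is nothing in the paper to compare against beyond noting that this is the intended background result.

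That said, there is a gap in the step you flag as the ``main obstacle.'' When you pass to the Doob decomposition $Y=M'+A$ with $A$ predictable and nonincreasing, the martingale increments are $\Delta M'_k=\Delta Y_k-\mu_k$ with $\mu_k\le 0$, so $\Delta M'_k\le C+|\mu_k|$ rather than $\Delta M'_k\le C$. The hypothesis gives no lower bound on $\Delta Y_k$, hence no control on $|\mu_k|$, and you therefore cannot invoke the martingale exponential bound with the same constant $C$; with $C+|\mu_k|$ you get something strictly weaker. The direct route you sketch first---bounding $e^{\theta x}\le 1+\theta x+\theta^2 x^2 h(\theta C)$ and then dropping the nonpositive drift term $\theta\mu_k$---already yields the inequality with $W_m=\sum_{k\le m}\E[(\Delta Y_k)^2\mid\mathcal F_{k-1}]$ in place of the conditional variances, and this is all the paper actually uses: every application immediately bounds $\Var[\Delta Y_k\mid\mathcal F_k]\le\E[(\Delta Y_k)^2\mid\mathcal F_k]$. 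So the sharpening from second moment to conditional variance is neither needed for the paper nor achieved by your Doob-decomposition reduction as written.
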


\textcolor{brown}{We apply Freedman's theorem to the supermartingale $V^+$. In particular we want to bound the probability that it is positive at step $r_{end}$. We have at step $0$ that $V^+(0)=-Nf(0)= -N^{1-\e/20}$. If $V^+(r_{end})$ is positive then $V^+(r_{end}) - V^+(0) > N^{1-\e/20}$. Thus we will use $\lambda = N^{1-\e/20}$}. We bound the one-step change to determine $C$. We have $|\D |V(r)|| \le d$ and  $|\D N ( e^{-t} + f(t) )| \le 2d$ so we can use $C=3d$.  Note that we have
\begin{align*}
  \Var [ \Delta V^\pm(r)| \mathcal{F}_{k}, \cE_k] & = \Var [ \Delta |V(r)| \;| \mathcal{F}_{k}, \cE_k] \le \E[ \Delta |V(r)|^2| \mathcal{F}_{k}, \cE_k] \le d^2.
\end{align*}
Thus we have $W_{r_{end}} \le r_{end} \cdot d^2 = O(Nd \log N)$ and we can take $b=O(Nd \log N)$. Freedman's theorem gives us \textcolor{brown}{that the probability that $V^+(r_{end})$ is positive is at most}
\[
\exp\left(-\frac{\lambda^2}{2(b+C\lambda) }\right) = \exp\brac{-\Omega\brac{\frac{N^{2-\e/10}}{Nd \log N + dN^{1-\e/20}}}} =o(1).
\]
\textcolor{brown}{Similarly one can apply Freedman's theorem to the submartingale $-V^-$ using the same values of $\lambda, C, b$ to show that w.h.p. $-V^-(r_{end})$ is not positive. Thus we have w.h.p. that $\cE_{r_{end}}$ does not fail due to condition \eqref{eqn:Vest}.}

\subsection{Dynamic concentration of $D_v(r)$}
Here we bound the probability that $\cE_{r_{end}}$ fails due to condition \eqref{eqn:Dest}. We define variables $D_v^+$ and $D_v^-$ as follows. 
\beq{Dv+-}{
D_v^{\pm}=D_v^{\pm}(r):=
\begin{cases} 
|D_v(r)|- d ( e^{-t} \pm f(t) ) & \text{if $\cE_{i-1}$ holds and $v \notin B(r)$},\\
D_v^{\pm}(r-1) & \text{otherwise}.
\end{cases}
}
If $\cE_{r_{end}}$ fails due to condition \eqref{eqn:Dest} then we either have that $D_v^+(r_{end})>0$ or $D_v^-(r_{end})<0$ for some $v$. Similarly to the last subsection, we will show these events are unlikely using Freedman's theorem. First we verify that $D_v^+$ is a supermartingale. As before we can assume $\cE_r$ holds. We have
\begin{align*}
   \E[\D|D_v(r)| \;| I(r), \cE_r]& = - \sum_{u \in D_v(r)} \Pr(\mbox{$u \notin V(r+1)$ and $v \notin B(r+1)$})\\
   & \le -\sum_{u \in D_v(r)} \frac{D_u(r) - dN^{-\e}}{|V(r)|}\\
& \le -\frac{(de^{-t} - df(t) - dN^{-\e/10})^2}{Ne^{-t}+Nf(t)}  \\
& = -\frac{d^2}{N}e^{-t} \cdot \frac{(1-e^tf(t) - e^t N^{-\e/10})^2}{1+e^tf(t)}\\
& = -\frac{d^2}{N}e^{-t} \cdot \brac{1 - 3e^tf(t)} + O\brac{d^2N^{-1-\e/15}}
\end{align*}

Meanwhile the one-step change in $d ( e^{-t} + f(t) )$ is 
\[
\frac{d^2}{N} ( -e^{-t} + f'(t) ) +O\brac{d^3N^{-2}}
\]
by Taylor's theorem. Thus we have 
\begin{align*}
     \E[\D D_v^+(r) \;| I(r), \cE_r] & \le -\frac{d^2}{N}e^{-t} \cdot \brac{1 - 3e^tf(t) } -\frac{d^2}{N} ( -e^{-t} + f'(t) ) + O\brac{d^2N^{-1-\e/15}}\\
     & = \frac{d^2}{N}(3f(t) - f'(t)) +  O\brac{d^2N^{-1-\e/15}}\\
     & = -\Omega\brac{d^2 N^{-1-\e/20}}
\end{align*}
 Thus $D_v^+$ is a supermartingale, and similarly $D_v^-$ is a submartingale. Now we apply Freedman's theorem. {\brown As in the last section, bounding the probability that $-D_v^-(r_{end})>0$ is entirely similar to bounding the probability that $-D_v^+(r_{end})>0$, so from here we will only show the work for the latter.} We have 
\begin{align*}
    |\D D_v^+(r)| & \le |\D D_v(r)| + |\D d  e^{-t}| + |\D d f(t) |\\
    &\le dN^{-\e} + O(d^2/N)\\
    & \le 2dN^{-\e}.
\end{align*} 
Thus we can use $C=2dN^{-\e}$. {\brown If $D_v^+(r_{end})>0$ then $D_v^+(r_{end})- D_v^+(0)>df(0) = dN^{-\e/20}$ } and so we use $\l = dN^{-\e/20}$. Note that 
\begin{align*}
  \Var [ \Delta D_v^+(r)| \mathcal{F}_{k}, \cE_k] = \Var [ \Delta |D_v(r)| \; | \mathcal{F}_{k}, \cE_k] &\le \E[ (\Delta |D_v(r)|)^2| \mathcal{F}_{k}, \cE_k] \\
  &\le C \E[\; |\Delta D_v^+(r)| \; | \mathcal{F}_{k}, \cE_k] \\
  & \le C \E[\; |\Delta |D_v(r)|\; | \; | \mathcal{F}_{k}, \cE_k] + C |\Delta d  e^{-t}| + C |\Delta d f(t) |\\
  & = O(d^3 N^{-1-\e}).
\end{align*}
Thus we can take $b=O(r_{end} \cdot d^3 N^{-1-\e}) = O(d^2 N^{-\e/2})$. Freedman's theorem gives us a failure probability of at most 
\[
\exp\set{-\frac{\lambda^2}{2(b+C\lambda) }}= \exp\set{-\Omega\brac{\frac{d^2N^{-\e/10}}{d^2 N^{-\e/2} + dN^{- \e} \cdot dN^{-\e/20}}}} =o\bfrac{1}{N}.
\]
This probability is small enough to beat a union bound over $N$ choices of $v$. Thus we have w.h.p. that $\cE_{r_{end}}$ does not fail due to condition \eqref{eqn:Dest}.
\subsection{The upper bound on $C_v$}
We now bound the probability that $\cE_{r_{end}}$ fails due to condition \eqref{eqn:Cbound}. We define variables $C_v^+$ as follows. 
\beq{Cv+}{
C_v^{+}=C_v^{+}(r):=
\begin{cases} 
|C_v(r)|- d^2 N^{-1-\e/2} i & \text{if $\cE_{i-1}$ holds},\\
C_v^{+}(r-1) & \text{otherwise}.
\end{cases}
}
We show that $C_v^{+}$ is a supermartingale. We have
\[
\E[\D|C_v(r)| \;| I(r), \cE_r] \le \frac{d \cdot dN^{-\e}}{|V(r)|} \le \frac{d^{2}N^{-\e}}{N^{1-\e/10}} \le d^2 N^{-1-\e/2}
\]
and so $\E[\D|C_v^+(r)| \;| I(r), \cE_r] \le d^2 N^{-1-\e/2}-d^2 N^{-1-\e/2}=0$. We apply Freedman's theorem. We have $\D C_v^+(r) \le \D C_v(r) \le dN^{-\e}$ and so we use $C=dN^{-\e}$.  Note that we have
\begin{align*}
  \Var [ \Delta C_v^{+}(r)| \mathcal{F}_{k}, \cE_k] = \Var [ \Delta C_v(r)| \mathcal{F}_{k}, \cE_k]&\le \E[ \Delta C_v(r)^2| \mathcal{F}_{k}, \cE_k]\\
  &\le dN^{-\e} \E[ |\Delta C_v(r)|| \mathcal{F}_{k}, \cE_k]\\
  & \le dN^{-\e} \cdot d^2 N^{-1-\e/2}\\
  & \le d^3 N^{-1-3\e / 2}
\end{align*}
Thus we have $W_{r_{end}} \le r_{end} \cdot d^3 N^{-1-3\e / 2} = O(d^2 N^{-\e})$ and we can take $b=O(d^2 N^{-\e})$. Using $\lambda = dN^{-\e/4}$, Freedman's theorem gives us that the probability we ever have $C_v^+(r) > \lambda$ is at most 
\[
\exp\left(-\frac{\lambda^2}{2(b+C\lambda) }\right) = \exp\brac{-\Omega\brac{\frac{d^2N^{-\e/2}}{d^2 N^{-\e} + dN^{-\e}\cdot dN^{-\e/4} }}} =o(1/N),
\]
which is small enough to beat the union bound over the $N$ choices for $v$. Thus w.h.p. we have for each $v$ that $C_v^+(r) \le \lambda$ and so 
\[
C_v(r) \le d^2 N^{-1-\e/2} r_{end} + \lambda < dN^{-\e/10}.
\]
Thus w.h.p. $\cE_{r_{end}}$ does not fail due to condition \eqref{eqn:Cbound}. This completes our proof of Theorem \ref{th2} for the case when $k=\Theta(n)$.

\section{Proof of Theorem \ref{th2} when $k=o(n)$}\label{smallk}
In this section we assume $\z^{-1} n^{1/2} \log^{1/2} n \le k \le \z n$ for a small constant $\z>0$. We use the same notation as in Section \ref{constant}, so $k=cn$.

For say $|x|\le 1/2$, we have $g(1-x)=-x+\frac{x^2}{2}+O(x^3)$ and so we can write
\begin{align*}
    p_N(c) &=  -g(c) + c - \frac12 c^2 + O(c^3),\\
    p_d(c) &= -g(c) + c - \frac 32 c^2 + O(c^3).\\
    p_N(c)-p_d(c) &= c^2+O(c^3).
\end{align*}
Thus, we have that 
\[
d=Ne^{-(c^2+O(c^3))n}. 
\]
With $p_1,p_2$ as in Section \ref{constant}, we now have
\begin{align*}
p_1(c, \d) - p_d(c)& = -c\d+O(c^3).\\
p_2(c, \d)&=\d \log \brac{\frac{c}{\d^2}}+ 2\d - \frac{\d^2}{c} + O(\d^2).
\end{align*}
 Let $\d = 0.9c$, and 
 \begin{align*}
     \g&:= \frac{d}{N} = \exp\{-c^2n + O(c^3n)\}\\
     \a&:= d\exp\set{-c\d n + O(c^3n) } = d \exp\{-0.9c^2 n + O(c^3 n)\}\\
     \b&:= \exp\set{ \brac{ \d \log \brac{\frac{c}{\d^2}}+ 2\d - \frac{\d^2}{c}}n + O(\d^2n)} =  \exp\set{-0.9 (c \log c) n + O(cn) }
 \end{align*}
Arguing as in Section \ref{constant}, we see that for each vertex $v$, there are at most $\a$ vertices $u$ whose codegree with $v$ is at least $\b$. 
 
 We assume that 
 \[
     \g \le \log^{-100} N
 \]
 which holds when $c$ is at least some large constant times $n^{-1/2} \log^{1/2} n$ (i.e. for $\z$ small enough). 
 \subsection{The good event}

Let $t=t(r) = \g r.$ Define the error function
\begin{equation}\label{eqn:fx}
  f(t):=  \g^{0.1} e^{10t}.
\end{equation}
Note that this is different from our error function $f(t)$ from Section \ref{sec:GE}. Let the {\it good event} $\cE_r$ be the event that the following inequalities \eqref{eqn:Vestx}, \eqref{eqn:Destx} and \eqref{eqn:Cboundx} all hold for all $r' \le r$:
\begin{align}
    \card{|V(r')| - Ne^{-t'}} &\le Nf(t') \label{eqn:Vestx}\\
    \card{|D_v(r')| - de^{-t'}} &\le df(t') \mbox{ for all $v \in V(r') \sm B(r')$} \label{eqn:Destx}\\
    |C_v(r')| &\le \g^{-0.1}\a \mbox{ for all $v \in V(r')$}. \label{eqn:Cboundx}
\end{align}

We will show that w.h.p. the good event $\cE_{r_{end}}$ holds where 
\[
r_{end}:= 0.001 \g^{-1} \log \g^{-1}=\Omega\bfrac{N\log N}{d}.
\]
In particular this will imply that w.h.p. the process lasts to at least step $r_{end}$, proving the lower bound in Theorem \ref{th2} for the case $c_n=o(1)$. Indeed, we have  $e^{10t}\leq \g^{-0.01}$ and so  $e^{-t}\geq \g^{0.001}$ and $f(t) \le \g^{0.09}$
for all $r \le r_{end}$. Now lines \eqref{eqn:Vestx} and \eqref{eqn:Destx} imply that for all $r \le r_{end}$ we have
\[
|V(r)| = (1+o(1))Ne^{-t}, \qquad |D_v(r)| = (1+o(1))de^{-t} \mbox{ for all $v \in V(r) \sm B(r).$}
\]
In particular $|V(r_{end})|$ is positive and so the process lasts to step $r_{end}$. The upper bound in Theorem \ref{th2} comes from the bound on $|V(r_{end})|$ implied by \eqref{eqn:Vestx}.

We will sometimes use the bounds (which hold for all $r \le r_{end}$ in the good event $\cE_r$)
\begin{equation}
    |V(r)| \ge \frac 12 N e^{-\g r_{end}} = \frac12 \g^{0.001} N, \qquad |B(r)| \le \a r_{end} = 0.001 \a \g^{-1} \log \g^{-1}.
\end{equation}

\subsection{Dynamic concentration of $V(r)$}
Here we bound the probability that $\cE_{r_{end}}$ fails due to condition \eqref{eqn:Vestx}. We define variables $V^+$ and $V^-$ exactly as in \eqref{V+-} (but now $f(t)$ is as in \eqref{eqn:fx}). We now
establish that $V^+$ is a supermartingale. 

First we show $\E[\D|V^+(r)| \;| I(r)] \le 0$. If $\cE_r$ fails then $\D V^+(r)= 0$ by definition, so we assume $\cE_r$ holds. We have
\begin{align*}
     \E[\D|V(r)| \;| I(r), \cE_r] &=  - \frac{1}{|V(r)|}\sum_{v \in V(r)} (1+D_v(r)) \\
     & \le -\frac{1}{|V(r)|} \brac{|V(r)| - |B(r)|} \brac{de^{-t} - df(t)}\\
     & \le -\brac{1- \frac{0.001 \a \g^{-1} \log \g^{-1} }{\frac12 \g^{0.001} N}} \brac{de^{-t} - df(t)}\\
     & \le -d \brac{e^{-t} - f(t) } + O\brac{  \a  \g^{-0.001} \log \g^{-1}} .
\end{align*}
We turn to bounding the one-step change in $N ( e^{-t} + f(t) )$. Since $\D t(r) = \frac dN$, we have for some $\tau \in (t(r), t(r+1))$ that
\begin{align*}
    \D N ( e^{-t} + f(t) ) &= N(-e^{-t} + f'(t)) \cdot \frac{d}{N} + \frac 12 N(e^{-\tau} + f''(\tau)) \cdot \frac{d^2}{N^2}\\
    &= d(-e^{-t} + f'(t)) + O\brac{d^2 N^{-1}}.
\end{align*}
 Thus we have 
\begin{align}
     \E[\D V^+(r) \;| I(r), \cE_r] & \le -d \brac{e^{-t} - f(t) } -d(-e^{-t} + f'(t)) + O\brac{  \a  \g^{-0.001} \log \g^{-1}}\label{28x}\\
     & = d(f(t) - f'(t)) + O\brac{  \a  \g^{-0.001} \log \g^{-1}}\nn\\
     &= -\Omega\brac{d\g^{0.1}}, \qquad\text{ since $\a/d\leq \g^{0.8}$}. \nn
\end{align}
Thus $V^+$ is a supermartingale. To see that $V^-$ is a submartingale, we replace \eqref{28x} by
\[
 \E[\D V^-(r) \;| I(r), \cE_r]  \ge -d \brac{e^{-t} - f(t) } -d(-e^{-t} - f'(t)) + O\brac{  \a  \g^{-0.001} \log \g^{-1}}=\Omega\brac{d\g^{0.1}}.
\]

{\brown We apply Freedman's theorem to the supermartingales $V^+$ and $-V^-$ to show w.h.p. neither of them becomes positive. The calculations for $-V^-$ is entirely similar to $V^+$ so we only show the latter. If $V^+(r_{end})>0$ then $V^+(r_{end})-V^+(0) > Nf(0) = N \g^{0.1}$. Thus we use $\lambda=N \g^{0.1} $.} We bound the one-step change to determine $C$. We have $|\D |V(r)|| \le d$ and  $|\D N ( e^{-t} + f(t) )| \le 2d$ so we can use $C=3d$.  Note that we have
\begin{align*}
  \Var [ \Delta V^+(r)| \mathcal{F}_{k}, \cE_k] & = \Var [ \Delta |V(r)| \;| \mathcal{F}_{k}, \cE_k] \le \E[ \Delta |V(r)|^2| \mathcal{F}_{k}, \cE_k] \le d^2.
\end{align*}
Thus we have $W_{r_{end}} \le d^2r_{end}  $ and we can take $b=d^2r_{end} = O\brac{d^2 \g^{-1} \log \g^{-1}}$. Freedman's theorem gives us a failure probability of at most 
\[
  \exp\set{-\frac{\lambda^2}{2(b+C\lambda) }} = \exp\set{-\Omega\set{\frac{ N^2 \g ^{0.2}}{d^2 \g^{-1} \log \g^{-1}+ Nd \g^{0.1} }}} =o(1).  
\]
Thus we have w.h.p. that $\cE_{r_{end}}$ does not fail due to condition \eqref{eqn:Vestx}.
\subsection{Dynamic concentration of $D_v(r)$}
Here we bound the probability that $\cE_{r_{end}}$ fails due to condition \eqref{eqn:Destx}. We define variables $D_v^+$ and $D_v^-$ as in \eqref{Dv+-}. We verify that $D_v^+$ is a supermartingale. As before we can assume $\cE_r$ holds. We have
\begin{align*}
   \E[\D|D_v(r)| \;| I(r), \cE_r]& \leq - \sum_{u \in D_v(r)} \Pr(\mbox{$u \notin V(r+1)$ and $v \notin B(r+1)$})\\
   & \le -\sum_{u \in D_v(r)} \frac{D_u(r) - \a}{|V(r)|}\\
& \le -\frac{(de^{-t} - df(t) - \g^{-0.1}\a)^2}{Ne^{-t}+Nf(t)}  \\
& = -\frac{d^2}{N}e^{-t} \cdot \frac{(1-e^tf(t) - e^t \g^{-0.1}\a d^{-1})^2}{1+e^tf(t)}\\
& = -\frac{d^2}{N}e^{-t} \cdot \brac{1 - 3e^tf(t)+ O\brac{e^t \g^{-0.1}\a d^{-1}}}\\
& = -\frac{d^2}{N}e^{-t} \cdot \brac{1 - 3e^tf(t)} + O\brac{ \g^{0.9}\a }
\end{align*}

Meanwhile the one-step change in $d ( e^{-t} + f(t) )$ is 
\[
\frac{d^2}{N} ( -e^{-t} + f'(t) ) +O\brac{d^3N^{-2}}
\]
by Taylor's theorem. Thus we have 
\begin{align*}
     \E[\D D_v^+(r) \;| I(r), \cE_r] & \le -\frac{d^2}{N}e^{-t} \cdot \brac{1 - 3e^tf(t) } -\frac{d^2}{N} ( -e^{-t} + f'(t) ) + O\brac{ \g^{0.9}\a }\\
     & = \frac{d^2}{N}(3f(t) - f'(t)) +  O\brac{ \g^{0.9}\a }\\
     &  = -\Omega\brac{  \g^{1.1}d }
\end{align*}
 Thus $D_v^+$ is a supermartingale, and similarly $D_v^-$ is a submartingale. 

Now we apply Freedman's theorem. We have 
\begin{align}
    |\D D_v^\pm(r)| & \le |\D D_v(r)| + |\D d  e^{-t}| + |\D d f(t) |\\
    &\le \b + O(d \g )\\
    & \le O(d \g ),
\end{align} 
where on the second line we used that $\D e^{-t} = O(\g e^{-t})= O(\g)$ and $\D f(t) = O(\g f'(t)) = O(\g^{1.1} e^{10t}) = O(\g^{1.09})$. Thus we can use $C=O(d \g )$. {\brown If $D_v^+(r_{end})>0$ then $D_v^+(r_{end})- D_v^+(0)>df(0) = d \g^{0.1}$} and so we use $\l =d \g^{0.1} $. Note that 
\begin{align*}
  \Var [ \Delta D_v^+(r)| \mathcal{F}_{k}, \cE_k] = \Var [ \Delta |D_v(r)| \; | \mathcal{F}_{k}, \cE_k] &\le \E[ (\Delta |D_v(r)|)^2| \mathcal{F}_{k}, \cE_k] \\
  &\le O(d\g) \E[\; |\Delta D_v(r)| \; | \mathcal{F}_{k}, \cE_k] \\
  & =O(d^2 \g^{2}).
\end{align*}
Thus we can take $b=O( d^2 \g^{2} r_{end}) = O( d^2 \g \log \g^{-1})$. Freedman's theorem gives us a failure probability of at most 
\begin{equation}
    \exp\left(-\frac{\lambda^2}{2(b+C\lambda) }\right) = \exp\brac{-\Omega\brac{\frac{d^2 \g^{0.2}}{d^2\g \log \g^{-1} + d^2 \g^{1.1} }}} =o(1/N).
\end{equation}
This probability is small enough to beat a union bound over $N$ choices of $v$. Thus we have w.h.p. that $\cE_{r_{end}}$ does not fail due to condition \eqref{eqn:Destx}.
\subsection{Dynamic concentration of $C_v$}
Here we bound the probability that $\cE_{r_{end}}$ fails due to condition \eqref{eqn:Cboundx}. We define variables $C_v^+$ as in \eqref{Cv+}. We show that $C_v^{+}$ is a supermartingale. We have
\begin{align}
   \E[\D|C_v(r)| \;| I(r), \cE_r]& \le \frac{d \a }{|V(r)|} \le 2d \a \g^{-0.001} N^{-1} = 2\a \g^{0.999}
\end{align}
and so $\E[\D|C_v^+(r)| \;| I(r), \cE_r] \le 0$. We apply Freedman's theorem. We have $\D C_v^+(r) \le \D C_v(r) \le \a$ and so we use $C=\a$.  Note that we have
\begin{align*}
  \Var [ \Delta C_v^{+}(r)| \mathcal{F}_{k}, \cE_k] = \Var [ \Delta C_v(r)| \mathcal{F}_{k}, \cE_k]&\le \E[ \Delta C_v(r)^2| \mathcal{F}_{k}, \cE_k]\\
  &\le \a\E[ |\Delta C_v(r)|| \mathcal{F}_{k}, \cE_k]\\
  & \le 2 \a^2 \g^{0.999}
\end{align*}
Thus we have $W_{r_{end}} =O(\a^2 \g^{-.001}  \log \g^{-1})$ and we can take $b=O(\a^2 \g^{-.001}  \log \g^{-1})$.  Freedman's theorem gives us that the probability we ever have $C_v^+(r) > \lambda := \frac12 \g^{-0.1}\a$ is at most 
\begin{equation}
  \exp\left(-\frac{\lambda^2}{2(b+C\lambda) }\right) = \exp\brac{-\Omega\brac{\frac{\g^{-0.2}\a^2}{\a^2 \g^{-.001}  \log \g^{-1} + \g^{-0.1} \a^2}}} =o(1/N).  
\end{equation}
Otherwise we have $C_v^+(r) \le \frac12 \g^{-0.1}\a$ and so 
\begin{align}
  C_v(r) &\le 2\a \g^{0.999} r_{end} + \frac12 \g^{-0.1}\a \\
  & = O(\a \g^{-.001} \log \g^{-1})  + \frac12 \g^{-0.1}\a\\
  & \le \g^{-0.1}\a.
\end{align}
Thus w.h.p. $\cE_{r_{end}}$ does not fail due to condition \eqref{eqn:Cboundx}. This completes the proof of Theorem \ref{th2}.
\section{Summary}\label{summary}
We have proved some new results about the random intersecting family process. There is still much to do. For example, can we say more about the structure of $\cI_k$ in Theorem \ref{thm2}. Second, as far as Theorem \ref{th2}, we say little about the structure of the final family and there is a large gap between the upper and lower bounds on the family size.

\appendix

\ignore{
Let $A_0=\bigcup_{S\in\cS_{r_1}^*}S$ and let $F_r=E_r\setminus A_0$. The sets $F_r$ are randomly chosen subject only to their intersecting $E_1,E_2,\ldots,E_{r_1}$. Let $d(i)=|\{r>r_1:i\in F_r\}|$ for $i\in[n]$. Then we have that $d(i)$ is dominated by $Bin(r,k/n)$ unless $i\in A_1= \bigcup_{j=1}^{r_1}E_j$ in which case $d(i)$ is dominated by $Bin(r,1/k)$. So we have
\[
\Pr(\exists r\geq k^{2}: \exists i:d(i)\geq r/k^{1/2})\leq n\sum_{r\geq k^{5/2}}\binom{r}{r/k^{1/2}} \bfrac{1}{k}^{r/k^{1/2}}\leq
 n\sum_{r\geq k^{2}}\bfrac{e}{k^{1/2}}^{r/k^{1/2}}=o(1).
\]
So, after we have generated $k^{2}$ sets, we find that the new set $F_{r+1}$ only intersects at most $r/k^{1/2}+r_1$ of $F_1,\ldots,F_{r}$. \pbc{if each vertex can have degree $r/k^{1/2}$ then an edge can cover up to $rk^{1/2}$ edges which is more than enough. I am not sure any proof of this form will work.} It follows that $\emptyset\neq E_i\cap E_{r+1}\subseteq A_0$ for all but $r/k^{1/2}+r_1$ indices $i\in [r]$. On the other hand we argue next that w.h.p. for any $S\in \cS_{r_1}^*$ we have $\f_S=|\set{i\leq r:E_i\cap A_0=S}|\geq rn^{-o(1)}$. Indeed, for $X\subseteq A_0$, let $\n_1(X)$ denote the number of edges $E$ that meet $E_1,\ldots,E_{r_1}$ and that $E\cap A_0=X$. Arguing as \eqref{nu} we see that 
\[
\brac{k-e(X)}^{r_1-e(X)}\binom{n-r_1}{k-(r_1-e(X))-|X|}\leq \nu_1(X)\leq k^{r_1-e(X)}\binom{n}{k-(r_1-e(X))-|X|}.
\]
This implies that $\n_1(X)/\n_1(Y)\leq n^{o(1)}$ for $X,Y\subseteq A_0$. Our claimed lower bound on $\f_S$ then follows from Chernoff bounds.

Thus $E_{r+1}\in \cI^{**}$ completing.the proofof Theorem \ref{thm2}.
}

\section{Appendix: Describing $\cS$ as a random matching process}
In the proof of Theorem \ref{thm2} we described the process of generating $\cS$ and $r_0$ which determine $\cI^*$ and $\cI^{**}$. Here we isolate just this process away from the broader context to set up an open problem that would have to be solved in order to fully describe the distribution of $\cI_k$ when $k = cn^{1/3}$. Recall that at the beginning of the random intersecting process, before vertices of degree three appear, $\cH_r$ is a simple hypergraph in which every pair of edges intersects in a unique vertex. If we represent the hyperedges as vertices then an independent set of vertices of degree two in $\cH_r$ corresponds to a matching in the complete graph $K_r$. And moreover, once $r_0$ is determined $\cS$ corresponds to an intersecting family $\mathcal{M}$ of matchings on $K_{r_0 - 1}$. For $k = c_n n^{1/3}$, and $c_n \rightarrow c$ we can describe how $\cS$ is generated by Procedure \ref{RandomMatchingAlgorithm} below. {\blue For a collection of finite set $\mathcal{N}$, $\text{rand}_c(\mathcal{N})$ refers to sampling a set $S$ from $\mathcal{N}$ with probability proportional to $c^{|S|}$.}

\begin{algorithm}\label{RandomMatchingAlgorithm}
\SetKwInOut{Input}{Input}\SetKwInOut{Output}{Output}\SetAlgorithmName{Procedure}{}

\Input{$c > 0$}
\Output{A random number $n$ and a family $\mathcal{M}$ of pairwise intersecting matchings of the complete graph $K_n$}
\BlankLine
$n \leftarrow 1$\;
$G \leftarrow K_n$\;
$\mathcal{N} \leftarrow \text{ All matchings on $G$}$\;
$\mathcal{M} \leftarrow \emptyset$\;
\BlankLine
\While{$\mathcal{M} = \emptyset$}{
	$M \leftarrow \text{rand}_c(\mathcal{N})$\;
	\If{$M = \emptyset$}{
		$n \leftarrow n + 1$\;
		$G \leftarrow K_n$\;
	}
	\Else{
		Add $M$ to $\mathcal{M}$\;
		$\mathcal{N} \leftarrow \text{ All matchings on $G$ that intersect $M$}$\;
	}
}
\While{$\mathcal{M}$ is not a maximal intersecting family of matchings on $K_n$}{
	$M \leftarrow \text{rand}_c(\mathcal{N})$\;
	\If{$M \notin \mathcal{M}$}
	{
		Add $M$ to $\mathcal{M}$\;
		$\mathcal{N} \leftarrow \text{ All matchings on $G$ that intersect every matching in $\mathcal{M}$}$\;
	}
}
\Return $\mathcal{M}$ on $K_n$\;
\caption{Random matching procedure}\label{RandomMatching}
\end{algorithm}
By the proof of Theorem \ref{thm2} we can see that $\mathcal{S}$ is generated by Procedure \ref{RandomMatchingAlgorithm} with input equal to $c^{-3}$ for $k = c_n n^{1/3}$, $c_n \rightarrow c$. The situation that Procedure \ref{RandomMatchingAlgorithm} outputs the single edge on $K_2$ corresponds to the case that the system is trivial. The case that Procedure \ref{RandomMatchingAlgorithm} outputs a single-edge matching on $K_3$ corresponds to the Hilton--Milner system described by \cite{Pat}. 

For $K_t$, $t \leq 5$ there is only one combinatorial type for a maximal family of intersecting matching $\mathcal{M}$ on $K_t$, namely all matchings that contain some fixed edge (i.e. a star). For $t = 6$, however, there are two: All matchings that contain some fixed edge and all matchings that contain at least two out of three edges of some perfect matching. For $t = r_0 - 1 \leq 5$ we can therefore determine exactly what $\mathcal{S}$ will look like. When $t = 6$ and $c = 1$, a routine calculation shows that conditioned stopping at $t = 6$, there is a 123/128 chance of getting a star and a 5/128 chance of getting all matchings that contain at least two out of three edges of a fixed perfect matching. As $t$ gets larger these families of matching can become arbitrarily complicated, so a full description of the distribution on $\cS$ would be quite difficult to work out even though we do have a generating function in part (d) of Theorem \ref{thm2} that captures the distribution on $t$.


\begin{thebibliography}{99}
%
\bibitem{B0} J. Balogh, S. Das, Delcourt, Liu and M. Sharifzadeh,  Intersecting families of discrete structures are typically trivial., {\em Journal of Combinatorial Theory, Series A} 132 (2015)  224-245.
%
\bibitem{B1} J. Balogh, S. Das, L. Liu, Sharifzadeh and T. Tran, Structure and supersaturation for
intersecting families, {\em Electronic Journal of Combinatorics} 26(2), 2019.
%
\bibitem{B2} J. Balogh, R. Garcia, L. Li and A. Wagner, \href{https://arxiv.org/pdf/2104.03260.pdf}{Intersecting families of sets are typically trivial}.
%
\bibitem{BB} P. Bennett and T. Bohman, A note on the random greedy independent set algorithm, {\em Random Structures and Algorithms} 49 (2016) 479–502.
%
\bibitem{BD} P. Bennett and A. Dudek, A gentle introduction to the differential equation method and dynamic concentration, {\em Discrete Mathematics} 345 (2022).
%
\bibitem{BCFMR} T. Bohman, C. Cooper, A.M. Frieze, R. Martin and M. Ruszink\'o, On Randomly Generated Intersecting Hypergraphs, {\em Electronic Journal of Combinatorics} 10 (2003).
%
\bibitem{BCFMR1} T. Bohman, C. Cooper, A.M. Frieze, R. Martin, M. Ruszink\'o and C. Smyth, On Randomly Generated Intersecting Hypergraphs II, {\em Random Structures and Algorithms} 30 (2007) 17-34.
%
\bibitem{DF} I. Dinur and E. Friedgut, Intersecting families are essentially contained in juntas,
{\em Combintaorics, Probability and Computing} 18 (2009) 107–122.
%
\bibitem{EKR} P. Erd\H{o}s, C. Ko, R. Rado, Intersection theorems for
systems of finite sets, {\em Quarterly Journal of Mathematics, Oxford Series (2)} 12 (1961)
 313-320.
%
\bibitem{HM} A. Hilton and E. Milner, Some intersection theorems for systems of finite
sets, {\em Quarterly Journal of Mathematics, Oxford Series (2)} 18 (1967) 369–384.
%
\bibitem{FK} P. Frankl and A. Kupavskii, Counting intersecting and pairs of cross-intersecting families, {\em Combinatorics, Probability and Computing} 27 (2018) 60–68.

\bibitem{LW} J. Lauer and N. Wormald, Large independent sets in regular graphs of large girth, {\em Journal of Combinatorial Theory Series B} 97 (2007) 999–1009.
%
\bibitem{Pat} B. Patkos, On randomly generated non-trivially intersecting hypergraphs, {\em Electronic Journal of Combinatorics} 10 (2010).

\bibitem{Wormald} N. Wormald, Differential equations for random processes and random graphs,
{\em Annals of Applied Probability} 5 (1995) 1217–1235.
\end{thebibliography}
\end{document}